\documentclass[draft,nonblindrev]{informs1} %

\usepackage{natbib}
 \NatBibNumeric
 \bibpunct[, ]{[}{]}{,}{n}{}{,}%

\usepackage[colorlinks=true,breaklinks=true,bookmarks=true,urlcolor=blue,
     citecolor=blue,linkcolor=blue,bookmarksopen=false,draft=false]{hyperref}

\usepackage[utf8]{inputenc} %
\usepackage[T1]{fontenc}    %
\usepackage{url}            %
\usepackage{booktabs}       %
\usepackage{amsfonts}       %
\usepackage{nicefrac}       %

\usepackage{algorithmic, algorithm}

\usepackage{amssymb,amsmath}
\usepackage{multirow,multicol}

\usepackage{xcolor,xspace}
\usepackage{lscape}
\usepackage{subfigure,graphicx,epsfig,tikz,caption}
\usepackage[normalem]{ulem}
\usepackage{enumerate}
\usepackage{verbatim}
\usepackage{makeidx,latexsym}

\usepackage{bm}
\usepackage{lscape}
\usepackage[figuresright]{rotating}

\usepackage[capitalize]{cleveref}
\crefrangeformat{equation}{(#3#1#4)--(#5#2#6)}
\crefformat{equation}{(#2#1#3)}

\newcommand{\grad}{\nabla}
\newcommand{\la}{\langle}
\newcommand{\ra}{\rangle}
\newcommand{\bbR}{\mathbb{R}}
\newcommand{\bbE}{\mathbb{E}}
\newcommand{\bbP}{\mathbb{P}}

\DeclareMathOperator{\Prox}{Prox}

\DeclareMathOperator{\OPT}{OPT}
\newcommand{\cX}{\mathcal{X}}

\newcommand{\cP}{\mathcal{P}}
\newcommand{\cD}{\mathcal{D}}
\newcommand{\cB}{\mathcal{B}}
\newcommand{\cT}{\mathcal{T}}
\newcommand{\cM}{\mathcal{M}}

\newcommand{\vx}{\bm{x}}

\def\EMAIL#1{\href{mailto:#1}{#1}}%

\TheoremsNumberedThrough     %

\EquationsNumberedThrough    %

\begin{document}

\RUNAUTHOR{Lee et al.}

\RUNTITLE{Non-Smooth, H\"older-Smooth, and Robust Submodular Maximization}

\TITLE{Non-Smooth, H\"older-Smooth, and Robust Submodular Maximization}

\ARTICLEAUTHORS{%
\AUTHOR{Duksang Lee}
\AFF{Department of Industrial and Systems Engineering, KAIST, Daejeon, South Korea, \EMAIL{duksang@kaist.ac.kr}}
\AUTHOR{Nam Ho-Nguyen}
\AFF{Discipline of Business Analytics, The University of Sydney, Sydney, Australia, \EMAIL{nam.ho-nguyen@sydney.edu.au}}
\AUTHOR{Dabeen Lee}
\AFF{Department of Industrial and Systems Engineering, KAIST, Daejeon, South Korea, \EMAIL{dabeenl@kaist.ac.kr}}
} %

\ABSTRACT{%
We study the problem of maximizing a continuous DR-submodular function that is not necessarily smooth. We prove that the continuous greedy algorithm achieves an $[(1-1/e)\OPT-\epsilon]$ guarantee when the function is monotone and H\"older-smooth, meaning that it admits a H\"older-continuous gradient. For functions that are non-differentiable or non-smooth, we propose a variant of the mirror-prox algorithm that attains an $[(1/2)\OPT-\epsilon]$ guarantee. We apply our algorithmic frameworks to robust submodular maximization and distributionally robust submodular maximization under Wasserstein ambiguity. In particular, the mirror-prox method applies to robust submodular maximization to obtain a single feasible solution whose value is at least $(1/2)\OPT-\epsilon$. For distributionally robust maximization under Wasserstein ambiguity, we deduce and work over a submodular-convex maximin reformulation whose objective function is H\"older-smooth, for which we may apply both the continuous greedy and the mirror-prox algorithms.
}%

\KEYWORDS{DR-Submodular Maximization, Robust Submodular Maximization, Distributionally Robust Optimization under Wasserstein Ambiguity, Non-Smooth Optimization, H\"older-Smoothness}
\MSCCLASS{
68W40, %
68W25, %
90C15, %
90C17, %
90C25, %
90C26, %
90C27, %
}
\ORMSCLASS{Primary: ; secondary: }
\HISTORY{}

\maketitle

\section{Introduction.}

Submodularity is a structural property that is exploited in both discrete and continuous domains and has numerous applications in optimization.
Submodular set functions are often associated with the problem of selecting the optimal group of items from a large pool of discrete candidates. Optimizing submodular set functions has applications in sensor placement~\cite{sensor-placement}, feature and variable selection~\cite{variable-selection}, dictionary learning~\cite{DK11}, document summarization~\cite{HLT10,HLT11}, image summarization~\cite{image1,streaming-matchoid2,streaming-matchoid3}, and active set selection in non-parametric learning~\cite{active-set}. Continuous submodularity also naturally arises in a wide range of application domains where the objective exhibits the diminishing returns property, such as MAP inference for determinantal point processes and mean-field inference of probabilistic graphical models~\cite{bian-long}, influence maximization with marketing strategies~\cite{influence} and robust budget allocation~\cite{soma-budget,staib-robust}.

An important line of research regarding submodularity is robust submodular optimization. In many application scenarios, submodular objective functions are often non-stationary or have underlying randomness, e.g., sensor placement~\cite{Krause-robust} and influence maximization~\cite{robust-influence-1,robust-influence-2,robust-influence-3}. Such functions are typically estimated by data and thus subject to estimation noise. The robust optimization framework considers a family of submodular functions and attempts to maximize them simultaneously by taking the point-wise minimum of the functions~\cite{Krause-robust,chen-robust,equator,Torrico-robust-2}. %
The distributionally robust submodular maximization framework extends robust submodular maximization. It considers a set of distributions over a family of submodular functions and attempts to maximize the minimum expectation over the distributions~\cite{dro-sfm}.

The celebrated continuous greedy algorithm~\cite{matroidcon,vondrak-dr} is used for maximizing a submodular set function over a matroid constraint and is based on optimizing the multilinear extension of the given submodular set function. A modification of the continuous greedy algorithm extends to solve continuous DR-submodular maximization~\cite{bian}, and there are other first-order methods for the continuous case~\cite{grad-sfm,conditional}. The multilinear extension of a submodular set function is differentiable and smooth, meaning that it has a Lipschitz continuous gradient, and the existing algorithms for continuous submodular maximization require smoothness as well. However, taking the point-wise minimum for robust maximization does not preserve differentiability nor smoothness, and similarly, the minimum of the expectation of a family of submodular functions is non-differentiable and non-smooth in general. For this reason, the aforementioned methods cannot directly apply to robust submodular maximization, and the existing methods for the robust problem cannot avoid violating constraints~\cite{Krause-robust,chen-robust,equator,torrico-robust-1,Torrico-robust-2}. The existing methods for distributionally robust problem require an expensive smoothing step~\cite{equator} or a high sample variance assumption~\cite{dro-sfm} to guarantee smoothness.

\paragraph{Our contributions.} Motivated by this, we study the problem of maximizing monotone up-concave continuous functions, which all robust versions of submodular functions satisfy. Our methods work for functions which do not admit a Lipschitz continuous gradient, and may not even be differentiable.
\begin{itemize}
\item  We prove that the continuous greedy algorithm works for a continuous up-concave function as long as it is differentiable and has a H\"older continuous gradient, i.e., H\"{o}lder-smooth. Here, H\"older continuity is a relaxed notion of Lipschitz continuity. We show that the algorithm returns a solution whose value is at least $(1-1/e)\OPT-\epsilon$ after $O(1/\epsilon^{\frac{1}{\sigma}})$ iterations, where $\OPT$ is the optimal value and $\sigma$ is the H\"older exponent.

\item For functions that are non-differentiable or differentiable but non-smooth, we propose a variant of the mirror-prox method that achieves a value at least $\OPT/2-\epsilon$ after $O(1/\epsilon^{\frac{2}{1+\sigma}})$ iterations. In particular, we have $\sigma=0$ for non-differentiable functions, in which case the number of required iterations is $O(1/\epsilon^2)$.

\item We apply our methods to robust DR-submodular maximization problems, and show that the mirror-prox method converges to a solution that achieves value at least $\OPT/2-\epsilon$ after $O(1/\epsilon^{2})$ iterations. We show how this can be applied to discrete multiobjective submodular problems \citep{swap-rounding,Udwani2021}, improving on existing guarantees. We also consider a distributionally robust formulation of DR-submodular maximization under Wasserstein ambiguity, and show that it can be cast as a robust DR-submodular maximization problem. We apply a standard Nesterov smoothing technique and provide approximation bounds under smoothing, but show that the resulting function is non-smooth, but admits a H\"older continuous gradient with parameter $\sigma = 1/2$. We then show that, for the distributionally robust problem, the continuous greedy algorithm returns a solution with value at least $(1-1/e)\OPT-\epsilon$ after $O(1/\epsilon^2)$ iterations, while the mirror-prox method achieves a value at least $\OPT/2-\epsilon$ after $O(1/\epsilon^{\frac{4}{3}})$ iterations.

\end{itemize}

\section{Related work.}

Starting with the work of~\citet{NemWolseyFisher1978}, algorithms for maximizing a submodular set function under various types of constraints have been extensively studied. One popular example is the continuous greedy algorithm~\cite{vondrak-dr,matroidcon} that was first introduced to solve the problem under a matroid constraint, and the algorithm applies to other complicated constraints such as a system of linear constraints and the intersection of multiple matroid constraints~\cite{matroidcon,kulik}. Its main idea is to solve the continuous relaxation obtained by the so-called multilinear extension of a given submodular set function and then apply rounding procedures to the solution from the continuous relaxation. Since then, the multilinear extension has gained attention~\cite{vondrak-dr,matroidcon,swap-rounding,kulik, sim-annealing,mor-curvature}. The multilinear extension of a submodular set function has nice properties such as smoothness and DR-submodularity, which generalizes the diminishing returns property of set functions to continuous functions. 

Algorithms for maximizing differentiable and smooth continuous DR-submodular functions were developed in several works. \citet{bian} proposed a variant of the continuous greedy algorithm for the problem of maximizing a continuous DR-submodular function. The algorithm utilizes a linear oracle over the constraint set, which is assumed to be down-closed and convex, and it returns a solution whose value is at least $(1-1/e)\text{OPT}-\epsilon$ after $O(d/\epsilon)$ iterations where $d$ is the ambient dimension. \citet{grad-sfm} showed that the stochastic gradient ascent achieves a value at least $\text{OPT}/2-\epsilon$ after $O(1/\epsilon)$ iterations under an arbitrary convex constraint set. Moreover, they considered a restricted setting where exact gradients are unavailable, but their unbiased stochastic estimates with a bounded variance are available. They showed that the same asymptotic guarantee can be achieved after $O(1/\epsilon^2)$ iterations. \citet{conditional} proposed the stochastic continuous greedy algorithm for the stochastic case that guarantees a value at least $(1-1/e)\text{OPT}-\epsilon$ after $O(1/\epsilon^3)$ iterations. \citet{scg++} and \citet{one-sample} then developed variants of the stochastic continuous greedy algorithm that attain the same guarantee after $O(1/\epsilon^2)$ iterations.

A related area is robust submodular maximization, where the aim is to maximize the point-wise minimum of multiple submodular functions~\cite{Krause-robust}. One of its major application domains is robust influence maximization~\cite{robust-influence-1,robust-influence-2,robust-influence-3}, where the goal is to maximize the worst-case influence spread characterized by multiple information diffusion processes. \citet{Krause-robust} showed that there is no polynomial time constant approximation algorithm for robust submodular maximization unless 
$P=NP$. They considered the cardinality constrained case, for which they developed a bi-criteria approximation algorithm which sacrifices feasibility to achieve an approximation guarantee. \citet{chen-robust,equator,Torrico-robust-2,torrico-robust-1} generalized the idea of bi-criteria approximation for general constraints such as matroid and knapsack constraints. Unfortunately, the continuous greedy method and other gradient based methods~\cite{bian,grad-sfm,conditional,scg++,one-sample} do not directly apply to the robust problem, as the robust submodular objective is not smooth in general. The mirror-prox algorithm was applied to smooth submodular problems by \citet{AdibiEtAl2022}. Our work expands its applicability to H\"{o}lder-smooth and non-smooth submodular problems by extending the results of \citet{Nemirovski2004,StonyakinEtAl2022}.

Distributionally robust submodular maximization aims to maximize the minimum expected function value where the minimum is taken over a family of probability distributions on a given set of submodular objective functions. Therefore, it is a generalization of both robust and stochastic submodular maximization. Its formulation depends on how we construct the family of distributions, and a popular way is to consider distributions that are close to a reference distribution, typically set to the empirical distribution of some past observations, based on a metric measuring the distance between two probability distributions. \citet{dro-sfm} considered the distributionally robust submodular maximization framework defined by the $\chi^2$-divergence. They showed that under a high sample variance condition, the distributionally robust objective becomes smooth. However, such a smooth assumption does not hold in general, thus \citet{equator} proposed applying a randomized smoothing technique of~\citet{duchi-smoothing}.

An ambiguity set based on the $\chi^2$-divergence contains only distributions whose support is the same as that of the reference distribution. In particular, when we use the empirical distribution on a finite sample as a reference, we cannot consider any data points outside the sample set. Using a Wasserstein ambiguity set alleviates this since it considers optimal transport of data points, subject to a transportation budget. \citet{MohajerinEsfahaniKuhn2018,BlanchetMurthy2019} showed that various distributionally robust problems with Wasserstein ambiguity can be reformulated using semi-infinite duality theory. In the context of DR-submodular maximization, a reformulation of the problem with Wasserstein ambiguity leads to a H\"{o}lder-smooth objective. Our work analyzing the continuous greedy and mirror-prox algorithms show that they can be applied to this application.

\section{Problem setting.}\label{sec:prelim}

In this paper, we will study the following problem:
\begin{equation}\label{SFM}
	\sup_{x\in\mathcal{X}} F(x)\tag{SFM-c}
\end{equation}
where $\mathcal{X} \subseteq \bbR_+^d$ is a convex constraint set and $F:\bbR^d \to \bbR$ satisfies assumptions in \cref{def:up-concave} stated below.
Before providing the assumptions, we discuss the motivation behind them. The notion of \emph{diminishing returns} is captured for a continuous function $F$ through the concept of \emph{DR-submodularity}: 
$$F(x+z e_i)-F(x)\geq F(y+z e_i)-F(y)$$
for any
$x,y\in \mathcal{X}$ 
such that $x\leq y$, standard basis vector $e_i\in\mathbb{R}^d$ and $z\in\mathbb{R}_+$. This generalizes the notion of submodularity for set functions, and, indeed, certain continuous extensions of submodular set functions (such as the multilinear extension and the softmax extension) are DR-submodular. There are many other classes of DR-submodular functions~\cite{bach}. A quadratic function $F(x)=x^\top A x/2 + b^\top x+c$ is DR-submodular if and only if all entries of $A$ are nonpositive. Functions involving $\varphi_{ij}(x_i-x_j)$ where $\varphi_{ij}:\mathbb{R}\to\mathbb{R}$ for $i,j\in[m]$ are convex, $g(\sum_{i\in [m]}\lambda_i x_i)$ where $g$ is concave and $\lambda\geq 0$, and $\log\det(\sum_{i\in[m]}x_i A_i)$ where matrices $A_i$'s are positive definite and $x\geq 0$ are all examples of DR-submodular functions. In this paper, however, we do not just consider DR-submodular objectives, but instead consider structured generalizations of them.

\subsection{Motivation: robust DR-submodular function maximization.}\label{sec:prelim-motivation} Consider a family of DR-submodular functions $f(x, p)$ where $p$ is a vector belonging to a given parameter set $\cP$. We are interested in the \emph{robust DR-submodular maximization} problem
\begin{equation}\label{eq:robust-SFM-c}
	\max_{x \in \cX} F(x), \quad F(x) := \min_{p \in \cP} f(x, p).\tag{R-SFM-c}
\end{equation}
Such an objective has arisen, for example, in submodular equilibrium computation \cite{equator} and its distributionally robust version with $\chi^2$-ambiguity \cite{dro-sfm}. Specifically, both \citet{equator,dro-sfm} consider finitely many functions $f_i(x)$ for $i \in [n]$, each one arising from the multilinear extension of a submodular set function (hence is DR-submodular), and seek to solve
\[ \max_{x \in \cX} F(x), \quad F(x) := \min_{p \in \cP} \sum_{i \in [n]} p_i f_i(x). \]
\citet{equator} consider $\cP = \Delta_n$ the $n$-simplex, whereas \citet{dro-sfm} consider $\cP = \left\{ p \in \cP : \frac{1}{2} \|n p - \bm{1}\|_2^2 \leq \rho \right\}$ for some fixed $\rho \in (0,(n-1)/2)$. Note that both \citet{equator,dro-sfm} are motivated by discrete objectives $\tilde{f}_i(S)$ where $S$ are subsets of some ground set of items $V$, and aim to choose a \emph{distribution} of sets $S \in \cD$ so that the minimum expected objective $\min_{p \in \cP} \sum_{i \in [n]} p_i \bbE_{S \sim \cD}[\tilde{f}_i(S)]$ is maximized. Using a correlation gap result, \citet{equator} shows that this problem can be approximately solved by considering $f_i(x)$ as the multilinear extension of $\tilde{f}_i(S)$ up to factor $1-1/e$. Therefore, they focus on the continuous objective $\min_{p \in \cP} \sum_{i \in [n]} p_i f_i(x)$. Importantly, both \cite{equator,dro-sfm} rely on smooth structure implied by the multilinear extensions, or employ smoothing techniques to obtain smoothness, thus enabling the application of conditional gradient (Frank-Wolfe) type algorithms.

In this paper, we focus on developing algorithms for more general types of robust objectives. One important example is \emph{distributionally robust DR-submodular maximization} over broader classes of functions. More specifically, consider a possibly infinite class of DR-submodular functions $\tilde{f}(x,\xi)$ parametrized by a vector $\xi \in \Xi$ where $\Xi$ belongs to a Euclidean space. The vector $\xi$ is assumed to be randomly drawn from $\Xi$ according to some distribution $Q$, which leads us to the objective $\bbE_{\xi \sim Q}[\tilde{f}(x,\xi)]$. However, when there is some uncertainty on $Q$, we consider a family of distributions $\mathcal{B}$ over $\Xi$ containing $Q$ and the following distributionally robust optimization problem
\begin{equation}\label{eq:DR-SFM}
	\max_{x \in\cX} F(x), \quad \text{where} \quad F(x) :=  \inf_{P\in\mathcal{B}}\mathbb{E}_{\xi\sim P}\left[\tilde{f}(x, \xi) \right].\tag{DR-SFM-c}
\end{equation}
For $\cB$, we consider the \emph{2-Wasserstein ball} of radius $\theta$, based on the $\ell_2$-norm, centered at distribution $Q$:
$$\cB := \mathcal{B}_2(Q,\theta)=\left\{P:\ d_{W,2}(P,Q)\leq \theta\right\}$$
where $d_W(P,Q)$ denotes the \emph{2-Wasserstein distance} between two probability distributions $P,Q$ over sample space $\Xi$, which is defined as
$$d_{W,2}(P,Q)=\inf_{\Pi}\left\{\left(\mathbb{E}_{(\xi,\zeta)\sim\Pi}\left[\|\xi-\zeta\|_2^2\right]\right)^{1/2}:\ \text{$\Pi$ has marginal distributions $P, Q$} \right\}.$$
In fact, we show that this into the framework of \eqref{eq:robust-SFM-c} via well-known reformulation techniques.
\begin{proposition}\label{prop:reformulation-1}
	Suppose $Q=\sum_{i\in[n]}q_i\delta_{\xi^i}$ for some $q \in \Delta_n$, $\xi^i \in \Xi$ and that $q_i>0$ for each $i\in[n]$. Furthermore, suppose that $\tilde{f}(x,\xi)$ is convex in $\xi$ for each $x$. Then for every $x \in \cX$, we have
	\begin{equation}\label{DR-SFM-G-2}
		\inf_{P \in \mathcal{B}_2(Q,\theta)} \mathbb{E}_{\xi \sim P} \left[ f(x,\xi) \right] = \inf_{p \in \cP} f(x,p),
	\end{equation}
	where
	\begin{subequations}\label{eq:DR-SFM-wasserstein-form}
	\begin{align}
	\cP &:= \left\{(\zeta^1,\ldots,\zeta^n): \sum_{i \in [n]} q_i \|\xi^i - \zeta^i\|_2^2 \leq \theta^2,\ \zeta^i\in\Xi\ \ \forall i\in[n] \right\}, \quad p := (\zeta^1,\ldots,\zeta^n)\label{eq:DR-SFM-wasserstein-uncertainty}\\
	f(x, p) &:= \sum_{i\in[n]} q_i \tilde{f}(x,\zeta^i).\label{eq:DR-SFM-wasserstein-objective}
	\end{align}
	\end{subequations}
\end{proposition}
The proof is in Appendix \ref{app:proofs}. Now, we are interested in solving \eqref{eq:robust-SFM-c} with $F(x)$ defined via \eqref{eq:DR-SFM-wasserstein-form}. The issue is that this $F$ is no longer guaranteed to be DR-submodular, despite being constructed from DR-submodular functions $\tilde{f}(x,xi)$, and also does not enjoy smoothness properties which previous DR-submodular function maximization techniques are reliant on. Fortunately, $F$ still satisfies a property called \emph{up-concavity} (defined below) which is critical for convergence of continuous DR-submodular maximization algorithms. As mentioned above, \citet{equator} employ a smoothing technique to obtain smoothness. However, due to the special structure of $F$, we will show in \cref{sec:dro} that application of a standard smoothing technique results in a \emph{H\"{o}lder-smooth} function (defined below), rather than a smooth one.

Therefore, this necessitates the development of new non-smooth and H\"{o}lder-smooth algorithms for continuous up-concave maximization, which is the motivation for our paper.

\subsection{Definitions and structural assumptions.}\label{sec:prelim-structural}
With this motivation in mind, we consider the following structural assumptions on our objective $F$.
\begin{definition}\label{def:up-concave}
	A function $F: \bbR^d \to \bbR$ is:
	\begin{itemize}
		\item \emph{up-concave} (equivalently \emph{coordinate-wise concave}~\cite{bian,equator}) if for any fixed $x\in\bbR^d$ and non-negative direction $v \in \bbR_+^d$, the function $t \mapsto F(x + tv)$ is concave.
		
		\item \emph{monotone} if $F(y) \geq F(x)$ whenever $y \geq x$ and $x, y \in \bbR^d$.
		
		\item \emph{non-negative} if $F(x) \geq 0$ for all $x \in \bbR^d$.
	\end{itemize}
\end{definition}
\citet[Proposition 2]{bian} show that all DR-submodular functions are up-concave, but also that up-concave maximization \eqref{SFM} exactly may still be hard \cite[Proposition 3]{bian}, thus we look for approximate solutions in the following sense.
\begin{definition}\label{def:approx-solution}
	We say that a solution $\bar{x}\in\mathcal{X}$ is an $(\alpha, \epsilon)$-approximate solution to~\eqref{SFM} for some $0\leq \alpha<1$ and $\epsilon>0$ if $$F(\bar{x})\geq \alpha\cdot \sup_{x\in \mathcal{X}} F(x) - \epsilon.$$
\end{definition}
In this paper, we develop first-order iterative algorithms that guarantee an $(\alpha, \epsilon)$-approximate solution to~\eqref{SFM} where the number of required iterations is bounded by $O(1/\epsilon^c)$ for some constant $c$. Before presenting these, we describe further structural assumptions on $F$ that we will use.

The existing literature focuses on solving \eqref{SFM} when the objective function $F$ is differentiable and smooth, that is, there exists some $\beta>0$ such that
$$\|\grad F(x)-\grad F(y)\|_*\leq \beta \|x-y\|$$
for any $x,y\in\mathbb{R}^d$. However, the up-concave functions that arise from robust and distributionally robust DR-submodular maximization problems are not necessarily smooth.
Therefore, in this paper, we consider the case when $F$ is non-smooth and potentially non-differentiable as well.
Firstly, when $F$ is non-differentiable, it is known that first-order information for convex functions can still be captured via subdifferentials and subgradients, and analogously for superdifferentials/gradients for concave functions. However, up-concave functions $F$ may be neither convex nor concave. Therefore, to capture first-order information for $F$, we introduce the notion of up-super-gradients defined as follows.
\begin{definition}\label{def:up-supergradient}
	We say that $g_x$ is an \emph{up-super-gradient} at $x\in\bbR^d$ if 
	\begin{equation}\label{eq:up-supergradient}
		F(y)\leq F(x)+g_x^\top(y-x)\quad\text{for any $y\in\mathbb{R}^d$ such that $y\geq x$ or $y\leq x$.}
	\end{equation}
	We define $\partial^\uparrow F(x)$ as the set of up-super-gradients of $F$ at $x$.
\end{definition}
As expected, when $F$ is differentiable, $\partial^\uparrow F(x)$ reduces to the gradient.
\begin{lemma}\label{diff-differential}
	If $F:\mathbb{R}^d \to \mathbb{R}$ is differentiable and up-concave, $\partial^\uparrow F(x) = \left\{\grad F(x)\right\}$ for any $x\in\mathbb{R}^d$.
\end{lemma}
The proof of Lemma~\ref{diff-differential} is given in Appendix~\ref{app:proofs}.

We relax the notion of smoothness via the notion of \emph{H\"older-smoothness}, defined as follows. 
\begin{definition}\label{def:holder-smooth}
We say that a function $F:\mathbb{R}^d\to\mathbb{R}$ is \emph{H\"older-smooth} with respect to a norm $\|\cdot\|$ and a monotone increasing function $h:\mathbb{R}_+\to\mathbb{R}_+$ with $\lim_{x\to0} h(x)=0$ if for any $x,y\in\mathbb{R}^d$, there exist $g_x\in\partial^\uparrow F(x)$ and $g_y\in\partial^\uparrow F(y)$ such that
\begin{equation}
	\|g_x-g_y\|_*\leq h(\|x-y\|).\label{eq:holder-smooth-nd-1}
\end{equation}
When $F$ is differentiable, this condition becomes
\begin{equation}\label{eq:holder-smooth}
\|\grad F(x)-\grad F(y)\|_*\leq h(\|x-y\|).
\end{equation}
\end{definition}
Here, if $h$ is given by $h(z)=\beta\cdot z$ for some constant $\beta>0$, then H\"older-smoothness with respect to $h$ reduces to smoothness. Throughout the paper, the particular form of $h$ we focus on is
\begin{equation}\label{eq:holder-smooth-h}
h(z)=\sum_{j \in [k]} \beta_j \cdot z^{\sigma_j}
\end{equation}
for some integer $k\geq 1$ and constants $\beta_1,\ldots,\beta_k>0$ and $0<\sigma_1,\ldots,\sigma_k\leq 1$. Although we allow $h$ to have multiple terms, i.e., $k>1$, a more common definition of H\"older smoothness is by a function $h$ with a single term~\cite{stone-smooth}. If there is no function $h$ of the form~\eqref{eq:holder-smooth-h} where at least one of $\sigma_1,\ldots,\sigma_k$ being positive with which $F$ satisfies condition~\eqref{eq:holder-smooth}, then we say that $F$ is \emph{non-smooth}.

\section{Continuous greedy method.}\label{sec:cg}

In this section, we consider~\eqref{SFM}, the problem of maximizing a monotone up-concave function, for the differentiable case. We present Algorithm~\ref{alg:conditional-gradient-upconcave-monotone}, which is a variant of the continuous greedy algorithm ~\cite{bian} for monotone up-concave function maximization. 
\begin{algorithm}%
	\caption{Continuous greedy algorithm for maximizing a differentiable monotone up-concave function.}\label{alg:conditional-gradient-upconcave-monotone}
	\begin{algorithmic}
		\STATE{Initialize $x_0\leftarrow 0$.}
		\FOR{$t=1,\ldots,T$}
		\STATE{Fetch $g_t$.}
		\STATE{$v_t=\argmax_{v\in\mathcal{X}} g_t^\top v$.}
		\STATE{Update $x_t = \left( 1 - \frac{1}{t} \right)x_{t-1} + \frac{1}{t} v_t = \frac{1}{t} (v_1 + \cdots + v_t)$.}
		\ENDFOR
		\RETURN{$x_T = \frac{1}{T} (v_1 + \ldots v_T)$.}
	\end{algorithmic}
\end{algorithm}

In Algorithm~\ref{alg:conditional-gradient-upconcave-monotone}, we take $x_t=(v_1+\cdots+v_t)/t$ at each iteration $t$, in which case, $x_t$ is a convex combination of points in $\mathcal{X}$. It is more common to take $x_t = (v_1+\cdots+v_t)/T$ for the continuous greedy algorithm~\cite{bian}, but the point is not necessarily a point in $\mathcal{X}$ when $t<T$. Since $F$ is monotone, scaling up $(v_1+\cdots+v_t)/T$ to obtain $(v_1+\cdots+v_t)/t$ always improves the objective, so we take the latter.

The following is the main lemma to relate the iterates of \cref{alg:conditional-gradient-upconcave-monotone} to the function $F$. Importantly, we allow errors in computing the gradient at each iteration; specifically, the $\kappa_t$ terms capture the effect of error in the gradient terms $g_t$ through the gaps $\|\grad F((t-1)x_{t-1}/T)-g_t\|_*$. If each of these are $\leq \delta$, then the total accumulated error due to inexact gradients is $O(\delta)$. Furthermore, the results given by~\cite{bian} assume that the objective function $F$ is smooth, but in our analysis, we show that H\"older-smoothness is sufficient to guarantee a finite convergence to an approximate solution.
\begin{lemma}\label{lemma:cg-complexity}
Let $F:\mathbb{R}^d\to\mathbb{R}_+$ be a differentiable, monotone, and up-concave function. Assume that $\sup_{x\in\mathcal{X}}\|x\|\leq R$ for some $R>0$ and there exists a function $\tilde h: \mathbb{R}_+\to\mathbb{R}_+$ such that for any $x,y \in \mathbb{R}^d$,
\begin{equation}\label{eq:H-smooth-generalized-0}
	F(x) \geq F(y) + \grad F(y)^\top (x-y) - \tilde h(\|x-y\|).
\end{equation}
Let $x_0,v_1,x_1,\ldots,v_T,x_T$ be the sequence generated by Algorithm \ref{alg:conditional-gradient-upconcave-monotone}.  Then
\begin{equation}\label{eq:fw-main-step} 
F(x_T) \geq \left( 1 - \frac{1}{e}\right) \sup_{x\in\mathcal{X}}F(x) - \sum_{s=0}^{T-1} \left( 1 - \frac{1}{T} \right)^s \kappa_{T-s} + \frac{1}{e}F(0)
\end{equation}
where for $1\leq t\leq T-1$, 
$$\kappa_t = \frac{2R}{T} \left\|\grad F\left(\frac{t-1}{T}x_{t-1}\right) - g_t\right\|_* + \tilde h\left(\left\|\frac{1}{T}v_t\right\|\right).$$
\end{lemma}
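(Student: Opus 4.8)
The plan is to run the classical continuous-greedy recursion on the rescaled iterates $\bm{y_t}:=\frac{t}{T}\bm{x_t}=\frac1T(\bm{v_1}+\cdots+\bm{v_t})$, which satisfy $\bm{y_0}=\bm{0}$, $\bm{y_T}=\bm{x_T}$, $\bm{y_t}-\bm{y_{t-1}}=\frac1T\bm{v_t}$ and $\bm{y_{t-1}}=\frac{t-1}{T}\bm{x_{t-1}}$, while carrying the gradient error through each step. Fix any $\bm{x}^*\in\mathcal{X}$ (replaced by a near-maximizer at the end). First I would obtain the per-step bound: applying~\eqref{eq:H-smooth-generalized-0} with $\bm{x}=\bm{y_t}$, $\bm{y}=\bm{y_{t-1}}$ gives $F(\bm{y_t})\ge F(\bm{y_{t-1}})+\frac1T\langle\grad F(\bm{y_{t-1}}),\bm{v_t}\rangle-\tilde h(\|\frac1T\bm{v_t}\|)$. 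Since $\bm{v_t}$ maximizes $\langle\bm{g_t},\cdot\rangle$ over $\mathcal{X}\ni\bm{x}^*$ we have $\langle\bm{g_t},\bm{v_t}-\bm{x}^*\rangle\ge0$, so writing $\grad F(\bm{y_{t-1}})=\bm{g_t}+(\grad F(\bm{y_{t-1}})-\bm{g_t})$, regrouping the error as $\langle\bm{g_t}-\grad F(\bm{y_{t-1}}),\bm{x}^*-\bm{v_t}\rangle$, and bounding it via Cauchy--Schwarz together with $\|\bm{x}^*-\bm{v_t}\|\le2R$, we get $\langle\grad F(\bm{y_{t-1}}),\bm{v_t}\rangle\ge\langle\grad F(\bm{y_{t-1}}),\bm{x}^*\rangle-2R\|\grad F(\bm{y_{t-1}})-\bm{g_t}\|_*$.

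Next I would prove the standard monotone/up-concave inequality $\langle\grad F(\bm{y_{t-1}}),\bm{x}^*\rangle\ge F(\bm{x}^*)-F(\bm{y_{t-1}})$, which is the source of the $(1-1/e)$ factor. Monotonicity plus differentiability give $\grad F\ge\bm{0}$ everywhere; up-concavity of $F$ along the nonnegative direction $\bm{d}:=(\bm{x}^*\vee\bm{y_{t-1}})-\bm{y_{t-1}}$ (componentwise maximum) emanating from $\bm{y_{t-1}}$ gives $F(\bm{x}^*\vee\bm{y_{t-1}})-F(\bm{y_{t-1}})\le\langle\grad F(\bm{y_{t-1}}),\bm{d}\rangle$; then $\bm{0}\le\bm{d}\le\bm{x}^*$ and $\grad F(\bm{y_{t-1}})\ge\bm{0}$ give $\langle\grad F(\bm{y_{t-1}}),\bm{d}\rangle\le\langle\grad F(\bm{y_{t-1}}),\bm{x}^*\rangle$, and monotonicity gives $F(\bm{x}^*\vee\bm{y_{t-1}})\ge F(\bm{x}^*)$; chaining these yields the claim. (Here I use $\bm{y_{t-1}}\ge\bm{0}$, which holds since each $\bm{v_i}\in\mathcal{X}\subseteq\mathbb{R}^d_+$, and $\bm{x}^*\ge\bm{0}$.) Substituting this into the per-step bound gives $F(\bm{y_t})\ge(1-\frac1T)F(\bm{y_{t-1}})+\frac1T F(\bm{x}^*)-\kappa_t$ with $\kappa_t=\frac{2R}{T}\|\grad F(\frac{t-1}{T}\bm{x_{t-1}})-\bm{g_t}\|_*+\tilde h(\|\frac1T\bm{v_t}\|)$, i.e. the $\kappa_t$ of the statement. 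Equivalently, for $\phi_t:=F(\bm{x}^*)-F(\bm{y_t})$ we have $\phi_t\le(1-\frac1T)\phi_{t-1}+\kappa_t$; unrolling from $t=T$ down to $t=1$ and using $\phi_0=F(\bm{x}^*)-F(\bm{0})$ together with the substitution $s=T-t$ gives $F(\bm{x_T})\ge\big(1-(1-\tfrac1T)^T\big)F(\bm{x}^*)+(1-\tfrac1T)^T F(\bm{0})-\sum_{s=0}^{T-1}(1-\tfrac1T)^s\kappa_{T-s}$.

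It then remains to replace $(1-1/T)^T$ by $e^{-1}$ and to extract the $+\frac1e F(\bm{0})$ term. Using $(1-1/T)^T\le e^{-1}$ and $F(\bm{0})\le\sup_{\bm{x}\in\mathcal{X}}F(\bm{x})$ — the latter because $F$ is monotone and $\bm{x}\ge\bm{0}$ for every $\bm{x}\in\mathcal{X}$ — the product $\big(e^{-1}-(1-\tfrac1T)^T\big)\big(F(\bm{x}^*)-F(\bm{0})\big)$ is nonnegative, which rearranges exactly to $\big(1-(1-\tfrac1T)^T\big)F(\bm{x}^*)+(1-\tfrac1T)^T F(\bm{0})\ge(1-e^{-1})F(\bm{x}^*)+e^{-1}F(\bm{0})$; inserting this, and then choosing $\bm{x}^*\in\mathcal{X}$ with $F(\bm{x}^*)\ge\sup_{\bm{x}\in\mathcal{X}}F(\bm{x})-\eta$ and letting $\eta\downarrow0$, yields~\eqref{eq:fw-main-step}. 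I do not anticipate a conceptual obstacle — this is the Calinescu--Chekuri--P\'al--Vondr\'ak recursion, which survives verbatim under up-concavity rather than DR-submodularity — so the only care needed is in the bookkeeping of the error term (regrouping so that a single $\|\grad F(\bm{y_{t-1}})-\bm{g_t}\|_*$ bound absorbs both $\bm{g_t}$-versus-$\grad F$ discrepancies and picks up the factor $2R$ from $\|\bm{x}^*-\bm{v_t}\|$) and in the closing convex-combination trade-off that manufactures the $e^{-1}F(\bm{0})$ term from $F(\bm{0})\le\sup_{\mathcal{X}}F$.
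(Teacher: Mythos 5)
Your proof is correct and follows the same architecture as the paper's: the rescaled iterates $\bm{y_t}=\tfrac{t}{T}\bm{x_t}$, the per-step bound from~\eqref{eq:H-smooth-generalized-0}, the up-concavity-plus-monotonicity inequality along the nonnegative direction $\bm{v_t^*}=\max\{\bm{x^*}-\bm{y_{t-1}},\bm{0}\}$, the recursion $\eta_{t+1}\le(1-1/T)\eta_t+\kappa_t$, and the same unrolling with $(1-1/T)^T\le 1/e$. The one place you genuinely deviate is the error bookkeeping. The paper bounds $\la \bm{g_t},\bm{v_t^*}\ra\le\la \bm{g_t},\bm{v_t}\ra$, which tacitly requires $\bm{v_t^*}\in\mathcal{X}$ — a down-closedness assumption on $\mathcal{X}$ that is never stated — and then absorbs the error via $\la\grad F(\bm{y_{t-1}})-\bm{g_t},\bm{v_t^*}-\bm{v_t}\ra$. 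You instead compare $\bm{v_t}$ against $\bm{x^*}\in\mathcal{X}$ directly, and bridge the gap with $\grad F\ge\bm{0}$ (from monotonicity) together with $\bm{0}\le\bm{v_t^*}\le\bm{x^*}$, so that $\la\grad F(\bm{y_{t-1}}),\bm{v_t^*}\ra\le\la\grad F(\bm{y_{t-1}}),\bm{x^*}\ra$. Both routes produce the same $\kappa_t$, but yours needs only that $\mathcal{X}\subseteq\mathbb{R}^d_+$ contains $\bm{x^*}$, so it is marginally more general; your explicit handling of a possibly unattained supremum, and of the sign condition $F(\bm{x^*})\ge F(\bm{0})$ needed to legitimately replace $(1-1/T)^T$ by $1/e$, are details the paper glosses over.
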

\begin{proof}{Proof of Lemma~\ref{lemma:cg-complexity}.}
Take $y_t=tx_t/T = (v_1+\ldots+v_t)/T$ for $0\leq t\leq T$ and $x^*\in\argmax_{x\in\mathcal{X}}F(x)$.
Then for each $1\leq t\leq T$,
\begin{align}\label{fw:eq1}
	\begin{aligned}
		F(y_t) 
		&\geq F(y_{t-1}) + \grad F(y_{t-1})^\top (y_t-y_{t-1}) - \tilde h(\|y_t - y_{t-1}\|)\\
		&= F(y_{t-1})+\frac{1}{T} \grad F(y_{t-1})^\top v_t - \tilde h \left(\left\|\frac{v_t}{T}\right\|\right).
	\end{aligned}
\end{align}
Next we define $u:\mathbb{R}_+\to\mathbb{R}_+$ as $u(\gamma)=F(y_{t-1} + \gamma x^*)$ for $\gamma\geq0$. %
Since $F$ is up-concave, differentiable and $x^* \in \cX \subset \bbR_+^d$, $u$ is concave and differentiable. In particular, we have $F(y_{t-1} + \gamma x^*) - F(y_{t-1})=u(\gamma) - u(0) \leq u'(0)\gamma = \gamma \grad F(y_{t-1})^\top x^*$. Consequently, setting $\gamma = 1$ results in
\begin{equation}\label{fw:eq2}
	F(y_{t-1} + x^*) - F(y_{t-1}) \leq \grad F(y_{t-1})^\top x^*. 
\end{equation}
Furthermore, $F(y_{t-1} + x^*) \geq F(x^*)$ since $F$ is monotone and $y_{t-1} + x^* \geq x^*$. Then it follows that
\begin{align}\label{fw:eq3}
	\begin{aligned}
		F(x^*) - F(y_{t-1}) &\leq F(y_{t-1}+x^*)-F(y_{t-1})\\
		&\leq \grad F(y_{t-1})^\top x^*\\
		&= g_t^\top x^* + (\grad F(y_{t-1}) - g_t)^\top x^*\\
		&\leq g_t^\top v_t + (\grad F(y_{t-1}) - g_t)^\top x^*\\
		&= \grad F(y_{t-1})^\top v_t + (\grad F(y_{t-1}) - g_t)^\top(x^* - v_t)
	\end{aligned}
\end{align}
where the second inequality is from~\eqref{fw:eq2} and the third inequality is due to our choice of $v_t$. Combining~\eqref{fw:eq1} and~\eqref{fw:eq3}, we have
\begin{align*}
	F(y_t) &\geq F(y_{t-1})+\frac{1}{T} \grad F(y_{t-1})^\top v_t - \tilde h\left(\left\|\frac{v_t}{T}\right\|\right)\\
	&\geq F(y_{t-1}) + \frac{1}{T} \left( F(x^*) - F(y_{t-1}) \right) - \frac{1}{T} (\grad F(y_{t-1}) - g_t)^\top (x^* - v_t) - \tilde h \left(\left\|\frac{v_t}{T}\right\|\right).
\end{align*}
Define $\eta_t = F(x^*) - F(y_{t-1})$. Then 
\begin{align}\label{eq:cg-step1}
	\begin{aligned}
		\eta_t - \eta_{t+1}&=F(y_t)-F(y_{t-1}) \\
		&\geq \frac{1}{T} \eta_t - \frac{1}{T} (\grad F(y_{t-1}) - g_t)^\top (x^* - v_t) - \tilde h\left(\left\|\frac{v_t}{T}\right\|\right)\\
		&\geq \frac{1}{T} \eta_t - \frac{1}{T} \| \grad F(y_{t-1}) - g_t\|_*\| x^* - v_t\| - \tilde h\left(\left\|\frac{v_t}{T}\right\|\right)\\ 
		&\geq \frac{1}{T} \eta_t - \frac{2 R}{T}\| \grad F(y_{t-1}) - g_t\|_*- \tilde h\left(\left\|\frac{v_t}{T}\right\|\right)
	\end{aligned}
\end{align}
where the second inequality is by H\"older's inequality and the third inequality is from our assumption that $\sup_{x\in\mathcal{X}}\|x\|\leq R$. Then~\eqref{eq:cg-step1} implies that $\eta_{t+1}\leq \left(1-{1}/{T}\right)\eta_{t}+\kappa_{t}$.
Unwinding this recursion, we get
\[ \eta_{t+1} \leq \left( 1 - \frac{1}{T} \right)^{t} \eta_1 + \sum_{s=0}^{t-1} \left( 1 -\frac{1}{T} \right)^s \kappa_{t-s}. \]
In particular, when $t=T$, we obtain
\[ F(x^*) - F(y_T) \leq \left( 1 - \frac{1}{T} \right)^{T} \left( F(x^*) - F(y_0) \right) + \sum_{s=0}^{T-1} \left( 1 - \frac{1}{T} \right)^s \kappa_{T-s}.\]
Since $y_0=0$, $y_T=x_T$, and $(1-1/T)^T\leq 1/e$, it follows that 
\[ \left( 1 - \frac{1}{e} \right) (F(x^*) - F(0)) \leq F(x_T) - F(0) + \sum_{s=0}^{T-1} \left( 1 - \frac{1}{T} \right)^s \kappa_{T-s} \]
as required.
\Halmos
\end{proof}

We now show that if $F$ is H\"older-smooth with respect to a norm $\|\cdot\|$ and a function $h:\mathbb{R}_+\to\mathbb{R}_+$ of the form~\eqref{eq:holder-smooth-h},
then there exists a function $\tilde h:\mathbb{R}_+\to\mathbb{R}_+$ satisfying~\eqref{eq:H-smooth-generalized-0}.

\begin{lemma}\label{lemma:holder-smooth}
If a differentiable function $F:\mathbb{R}^d\to\mathbb{R}_+$ is H\"older-smooth with respect to a norm $\|\cdot\|$ and $h$, then for any $x,y\in\mathbb{R}^d$,
\begin{equation}\label{eq:H-smooth-generalized}
	F(x) \geq F(y) + \grad F(y)^\top (x-y) - \sum_{j \in [k]} \frac{\beta_j}{1+\sigma_j}\|x-y\|^{1+\sigma_j}.
\end{equation}
\end{lemma}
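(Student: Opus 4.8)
The plan is to write the first-order residual $F(\bm{x})-F(\bm{y})-\langle\grad F(\bm{y}),\bm{x}-\bm{y}\rangle$ as an integral of $\grad F$ along the segment from $\bm{y}$ to $\bm{x}$ and control that integral using the H\"older-smoothness hypothesis. The case $\bm{x}=\bm{y}$ is trivial, so assume $\bm{x}\neq\bm{y}$. Since $F$ is differentiable on $\mathbb{R}^d$ and H\"older continuity of $\grad F$ (with $\lim_{z\to0}h(z)=0$) implies $\grad F$ is continuous, the map $t\mapsto\langle\grad F(\bm{y}+t(\bm{x}-\bm{y})),\bm{x}-\bm{y}\rangle$ is continuous, hence integrable, and the fundamental theorem of calculus applied to $\phi(t):=F(\bm{y}+t(\bm{x}-\bm{y}))$ gives
\begin{equation*}
F(\bm{x})-F(\bm{y})=\int_0^1\left\langle\grad F(\bm{y}+t(\bm{x}-\bm{y})),\,\bm{x}-\bm{y}\right\rangle\,dt.
\end{equation*}
Subtracting $\langle\grad F(\bm{y}),\bm{x}-\bm{y}\rangle=\int_0^1\langle\grad F(\bm{y}),\bm{x}-\bm{y}\rangle\,dt$ from both sides expresses the residual as $\int_0^1\langle\grad F(\bm{y}+t(\bm{x}-\bm{y}))-\grad F(\bm{y}),\,\bm{x}-\bm{y}\rangle\,dt$.

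Next I would bound the integrand from below pointwise in $t$. By the defining inequality of the dual norm, $\langle\bm{g},\bm{v}\rangle\geq-\|\bm{g}\|_*\|\bm{v}\|$, so
\begin{equation*}
\left\langle\grad F(\bm{y}+t(\bm{x}-\bm{y}))-\grad F(\bm{y}),\,\bm{x}-\bm{y}\right\rangle\geq-\left\|\grad F(\bm{y}+t(\bm{x}-\bm{y}))-\grad F(\bm{y})\right\|_*\,\|\bm{x}-\bm{y}\|,
\end{equation*}
and the H\"older-smoothness assumption~\eqref{eq:holder-smooth}, together with the explicit form~\eqref{eq:holder-smooth-h} of $h$, gives $\|\grad F(\bm{y}+t(\bm{x}-\bm{y}))-\grad F(\bm{y})\|_*\leq h(\|t(\bm{x}-\bm{y})\|)=\sum_{i=1}^k\beta_i\,t^{\sigma_i}\|\bm{x}-\bm{y}\|^{\sigma_i}$.

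Finally I would substitute this into the integral and evaluate term by term using $\int_0^1 t^{\sigma_i}\,dt=\frac{1}{1+\sigma_i}$, obtaining $F(\bm{x})-F(\bm{y})-\langle\grad F(\bm{y}),\bm{x}-\bm{y}\rangle\geq-\sum_{i=1}^k\frac{\beta_i}{1+\sigma_i}\|\bm{x}-\bm{y}\|^{1+\sigma_i}$, which is exactly~\eqref{eq:H-smooth-generalized}. I do not anticipate a genuine obstacle: the argument is the standard "descent lemma" computation adapted from the Lipschitz case to the H\"older case. The only points that need care are (i) invoking the fundamental theorem of calculus legitimately, which is justified by differentiability of $F$ and continuity of $\grad F$, and (ii) applying the dual-norm bound in the correct direction so that the error term enters with a minus sign; the multi-term sum in $h$ poses no difficulty since we simply integrate each monomial $t^{\sigma_i}$ separately.
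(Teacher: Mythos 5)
Your argument is correct and is essentially identical to the paper's proof: both express the residual $F(\bm{x})-F(\bm{y})-\langle\grad F(\bm{y}),\bm{x}-\bm{y}\rangle$ as $\int_0^1\langle\grad F(\bm{y}+t(\bm{x}-\bm{y}))-\grad F(\bm{y}),\bm{x}-\bm{y}\rangle\,dt$, bound it via the dual-norm (H\"older) inequality and the hypothesis $\|\grad F(\bm{y}+t(\bm{x}-\bm{y}))-\grad F(\bm{y})\|_*\leq h(t\|\bm{x}-\bm{y}\|)$, and integrate each monomial $t^{\sigma_i}$ to obtain the constants $\beta_i/(1+\sigma_i)$. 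The only cosmetic difference is that the paper bounds the absolute value of the residual while you bound it from below directly, which yields the same conclusion.
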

\begin{proof}{Proof.}
Let $x,y\in\mathbb{R}^d$. Since $F$ is differentiable, we have that
\begin{equation}\label{eq:cg-iteration-1}
F(x)- F(y) = \int_{0}^1 \grad F(y+t(x-y))^\top (x-y)dt.
\end{equation}
Next we show that the following relations hold.
\begin{align}\label{eq:cg-iteration-2}
\begin{aligned}
&\left|F(x)- F(y)-\grad F(y)^\top(x-y)\right|\\&= \left| \int_{0}^1 (\grad F(y+t(x-y))-\grad F(y))^\top (x-y) dt\right|\\
&\leq \int_{0}^1 \left| (\grad F(y+t(x-y))-\grad F(y))^\top (x-y) \right|dt\\
&\leq \int_{0}^1 \|\grad F(y+t(x-y))-\grad F(y)\|_*\cdot\|x-y\|dt\\
&\leq \int_{0}^1 h(t\|x-y\|)\cdot\|x-y\|dt\\
&=\int_{0}^1 \left(\sum_{j \in [k]} \beta_j t^{\sigma_j}\|x-y\|^{1+\sigma_j}\right)dt\\
&=\sum_{j \in [k]} \frac{\beta_j}{1+\sigma_j}\|x-y\|^{1+\sigma_j}
\end{aligned}
\end{align}
where the first equality comes from~\eqref{eq:cg-iteration-1}, the second inequality is by H\"older's inequality. Then~\eqref{eq:H-smooth-generalized} follows from~\eqref{eq:cg-iteration-2}.
\Halmos
\end{proof}

Based on Lemmas~\ref{lemma:cg-complexity} and~\ref{lemma:holder-smooth}, we obtain the following convergence result on Algorithm~\ref{alg:conditional-gradient-upconcave-monotone}.
\begin{theorem}\label{thm:conditional-gradient}
Let $F:\mathbb{R}^d\to\mathbb{R}_+$ be a differentiable, monotone, and up-concave function that is H\"older-smooth with respect to  a norm $\|\cdot\|$ and $h$. Assume that for each iteration of Algorithm~\ref{alg:conditional-gradient-upconcave-monotone}, $g_t$ is chosen so that $\|\grad F((t-1)x_{t-1}/T) - g_t\|_* \leq \delta$ and $\sup_{x\in\mathcal{X}}\|x\|\leq R$ for some $R>0$. Then 
\begin{equation}\label{eq:cg-performance-bound}
F(x_T) - F(0) \geq \left( 1 - \frac{1}{e} \right)\sup_{x\in\mathcal{X}} \left\{ F(x) - F(0) \right\} - \left( 2\delta R + \sum_{j \in [k]}\frac{\beta_j R^{1+\sigma_j}}{T^{\sigma_j}} \right).
\end{equation}
\end{theorem}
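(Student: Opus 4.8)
The statement is obtained by feeding the conclusion of Lemma~\ref{lemma:holder-smooth} into Lemma~\ref{lemma:cg-complexity} and then bounding the accumulated error term uniformly. First, since $F$ is differentiable and H\"older-smooth with respect to $\|\cdot\|$ and an $h$ of the form~\eqref{eq:holder-smooth-h}, Lemma~\ref{lemma:holder-smooth} shows that the hypothesis~\eqref{eq:H-smooth-generalized-0} of Lemma~\ref{lemma:cg-complexity} holds with $\tilde h(z)=\sum_{i=1}^k \frac{\beta_i}{1+\sigma_i}z^{1+\sigma_i}$, which is monotone increasing and vanishes at $0$. Combined with the assumption $\sup_{\bm{x}\in\mathcal{X}}\|\bm{x}\|\leq R$, all the hypotheses of Lemma~\ref{lemma:cg-complexity} are met, so~\eqref{eq:fw-main-step} holds for the sequence produced by Algorithm~\ref{alg:conditional-gradient-upconcave-monotone} with this $\tilde h$.

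Next I would bound each $\kappa_t$ by a quantity independent of $t$. For the first term of $\kappa_t$, the assumed oracle accuracy $\|\grad F((t/T)\bm{x_t})-\bm{g_t}\|_*\leq\delta$, applied at the iterate appearing in $\kappa_t$, gives $\frac{2R}{T}\|\cdot\|_*\leq \frac{2R\delta}{T}$. For the second term, every $\bm{v_t}\in\mathcal{X}$ so $\|\bm{v_t}\|\leq R$, hence $\|\tfrac1T\bm{v_t}\|\leq R/T$, and by monotonicity of $\tilde h$ we get $\tilde h(\|\tfrac1T\bm{v_t}\|)\leq \tilde h(R/T)=\sum_{i=1}^k \frac{\beta_i}{1+\sigma_i}\frac{R^{1+\sigma_i}}{T^{1+\sigma_i}}$. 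Thus $\kappa_t\leq \frac{2R\delta}{T}+\sum_{i=1}^k \frac{\beta_i}{1+\sigma_i}\frac{R^{1+\sigma_i}}{T^{1+\sigma_i}}$ for all $t$. Plugging this uniform bound into~\eqref{eq:fw-main-step} and using the geometric-series estimate $\sum_{s=0}^{T-1}(1-1/T)^s = T\bigl(1-(1-1/T)^T\bigr)\leq T$, the accumulated error is at most $T\cdot\bigl(\frac{2R\delta}{T}+\sum_i \frac{\beta_i}{1+\sigma_i}\frac{R^{1+\sigma_i}}{T^{1+\sigma_i}}\bigr)=2R\delta+\sum_i \frac{\beta_i}{1+\sigma_i}\frac{R^{1+\sigma_i}}{T^{\sigma_i}}\leq 2\delta R+\sum_i \frac{\beta_i R^{1+\sigma_i}}{T^{\sigma_i}}$, where the last step uses $1/(1+\sigma_i)\leq1$.

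Finally, I would rearrange~\eqref{eq:fw-main-step}: moving $F(\bm{0})$ to the left-hand side and combining it with the $\frac1e F(\bm{0})$ term on the right yields $F(\bm{x_T})-F(\bm{0})\geq (1-\tfrac1e)\bigl(\sup_{\bm{x}\in\mathcal{X}}F(\bm{x})-F(\bm{0})\bigr)-(\text{error})$, and since $F(\bm{0})$ is a constant this equals $(1-\tfrac1e)\sup_{\bm{x}\in\mathcal{X}}\{F(\bm{x})-F(\bm{0})\}$ minus the error bound derived above, which is precisely~\eqref{eq:cg-performance-bound}.

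\textbf{Main obstacle.} All the real work is in Lemma~\ref{lemma:cg-complexity}, whose proof is deferred; granting it, the only delicate points here are (i) aligning the iterate that appears inside the gradient-error term of $\kappa_t$ with the one in the theorem's accuracy hypothesis, which is a bookkeeping matter stemming from the rescaling $\bm{x_t}=\frac1t\sum_{s\le t}\bm{v_s}$ versus $\frac{t}{T}\bm{x_t}=\frac1T\sum_{s\le t}\bm{v_s}$, and (ii) ensuring the geometric factor $\sum_{s}(1-1/T)^s\le T$ is absorbed correctly so that the final exponents are $T^{\sigma_i}$ rather than $T^{1+\sigma_i}$; everything else is a routine substitution.
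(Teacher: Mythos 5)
Your proposal is correct and follows essentially the same route as the paper's proof: instantiate Lemma~\ref{lemma:cg-complexity} with the $\tilde h$ supplied by Lemma~\ref{lemma:holder-smooth}, bound each $\kappa_t$ uniformly by $\frac{2\delta R}{T}+\sum_i\frac{\beta_i}{1+\sigma_i}(R/T)^{1+\sigma_i}$, sum the geometric series (the paper writes the factor as $1-(1-1/T)^T\leq 1$, which is the same estimate as your $\sum_s(1-1/T)^s\leq T$), drop the $1/(1+\sigma_i)$ factors, and rearrange. The indexing mismatch you flag between the iterate in $\kappa_t$ and the one in the accuracy hypothesis is present in the paper itself and is handled there in exactly the same bookkeeping spirit.
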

\begin{proof}{Proof.}
For $1\leq t\leq T$, we have
\begin{equation*}
\kappa_t = \frac{2R}{T} \left\| \grad F\left( \frac{t-1}{T} x_{t-1} \right) - g_t \right\|_* + \sum_{j \in [k]} \frac{\beta_j}{1+\sigma_j}\|v_t/T\|^{1+\sigma_j} \leq \frac{2\delta R}{T} + \sum_{j \in [k]}\frac{\beta_j}{1+\sigma_j}\left(\frac{R}{T}\right)^{1+\sigma_j}
\end{equation*}
because $\| \grad F((t-1)x_{t-1}/T) - g_t\|_*\leq \delta$ and $\|v_t\|\leq R$.
Therefore, it follows that 
\begin{align*}
\sum_{s=0}^{T-1} \left( 1 - \frac{1}{T} \right)^s \kappa_{T-s} &\leq \frac{1}{T} \sum_{s=0}^{T-1} \left( 1 - \frac{1}{T} \right)^s \left( 2\delta R + \sum_{j \in [k]}\beta_j \left(\frac{R^{1+\sigma_j}}{T^{\sigma_j}}\right)\right)\\
&= \left( 1 - \left(1 - \frac{1}{T}\right)^T \right) \left( 2\delta R + \sum_{j \in [k]}\beta_j \left(\frac{R^{1+\sigma_j}}{T^{\sigma_j}}\right) \right)\\
&\leq \left( 2\delta R + \sum_{j \in [k]}\beta_j \left(\frac{R^{1+\sigma_j}}{T^{\sigma_j}}\right) \right).
\end{align*}
Plugging this to~\eqref{eq:fw-main-step} in Lemma~\ref{lemma:cg-complexity}, we deduce that
\[ \left( 1 - \frac{1}{e} \right) (F(x^*) - F(x_0)) \leq F(x_T) - F(x_0) +  \left( 2\delta R + \sum_{j \in [k]}\beta_j \left(\frac{R^{1+\sigma_j}}{T^{\sigma_j}}\right) \right) \]
as required.
\Halmos
\end{proof}

As an immediate corollary, we obtain the following convergence result.
\begin{corollary}\label{cor:cg-convergence-rate}
Let $F:\mathbb{R}^d\to\mathbb{R}_+$ be a differentiable, monotone, and up-concave function that is H\"older-smooth with respect to a norm $\|\cdot\|$ and $h$. Let $\sigma:=\min_{j\in[k]}\sigma_j$. If $F(0) = 0$ and $\delta = O(\epsilon)$, then Algorithm~\ref{alg:conditional-gradient-upconcave-monotone} returns an $(1-1/e,\epsilon)$-approximate solution after $O(1/\epsilon^{1/\sigma})$ iterations.
\end{corollary}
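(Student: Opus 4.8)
The plan is to simply invoke Theorem~\ref{thm:conditional-gradient} and then choose the horizon $T$ large enough that the additive error term drops below $\epsilon$. Substituting the hypothesis $F(\bm 0)=0$ into the bound~\eqref{eq:cg-performance-bound} yields
\[
F(\bm{x_T}) \geq \left(1-\frac{1}{e}\right)\sup_{\bm x\in\mathcal X}F(\bm x) - \left(2\delta R + \sum_{i=1}^{k}\frac{\beta_i R^{1+\sigma_i}}{T^{\sigma_i}}\right),
\]
so by Definition~\ref{def:approx-solution} it is enough to guarantee $2\delta R + \sum_{i=1}^{k}\beta_i R^{1+\sigma_i}/T^{\sigma_i} \leq \epsilon$, and then exhibit an upper bound of order $\epsilon^{-1/\sigma}$ on the number of iterations needed.

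I would bound the two pieces of the error separately. For the gradient-error term, since $\delta = O(\epsilon)$ we may assume $\delta$ is chosen so that $\delta \leq \epsilon/(4R)$, giving $2\delta R \leq \epsilon/2$. For the discretization term, the key observation is that $\sigma := \min_{i\in[k]}\sigma_i$, so for any integer $T\geq 1$ we have $T^{\sigma_i}\geq T^{\sigma}$ for every $i$, and hence $\sum_{i=1}^{k}\beta_i R^{1+\sigma_i}/T^{\sigma_i} \leq C/T^{\sigma}$, where $C := \sum_{i=1}^{k}\beta_i R^{1+\sigma_i}$ is a constant depending only on the H\"older data and on $R$, not on $\epsilon$. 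Requiring $C/T^{\sigma}\leq \epsilon/2$ is equivalent to $T\geq (2C/\epsilon)^{1/\sigma}$, so taking $T = \lceil (2C/\epsilon)^{1/\sigma}\rceil = O(1/\epsilon^{1/\sigma})$ suffices. Combining the two bounds, the total additive error is at most $\epsilon$, and since $F(\bm 0)=0$ the displayed inequality becomes $F(\bm{x_T}) \geq (1-1/e)\sup_{\bm x\in\mathcal X}F(\bm x) - \epsilon$, i.e.\ $\bm{x_T}$ is a $(1-1/e,\epsilon)$-approximate solution.

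I do not expect a genuine obstacle here: all the analytic content — the $1-1/e$ factor, the error accumulation through the continuous greedy trajectory, and the conversion of H\"older-smoothness into the inequality~\eqref{eq:H-smooth-generalized} — has already been absorbed into Lemmas~\ref{lemma:cg-complexity} and~\ref{lemma:holder-smooth} and Theorem~\ref{thm:conditional-gradient}. The only points requiring a little care are the bookkeeping of constants (in particular reading "$\delta = O(\epsilon)$" as licensing the choice $\delta\le\epsilon/(4R)$) and the reduction of the multi-term H\"older function to a single rate governed by the smallest exponent $\sigma$, which is exactly where the $T^{\sigma_i}\ge T^{\sigma}$ step is used.
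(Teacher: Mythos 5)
Your argument is correct and is exactly the bookkeeping the paper intends: the corollary is stated as an immediate consequence of Theorem~\ref{thm:conditional-gradient}, and your two steps (absorbing $2\delta R$ into $\epsilon/2$ via $\delta=O(\epsilon)$, and collapsing the multi-term rate to $C/T^{\sigma}$ via $T^{\sigma_i}\geq T^{\sigma}$ for $T\geq 1$) are the only content needed. The mild liberty of reading $\delta=O(\epsilon)$ as permitting $\delta\leq \epsilon/(4R)$ is harmless, since otherwise one simply obtains an $O(\epsilon)$ additive error and rescales.
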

By Corollary~\ref{cor:cg-convergence-rate}, as long as $\min_{j\in[k]}\sigma_j>0$, Algorithm~\ref{alg:conditional-gradient-upconcave-monotone} converges to an $(1-1/e,\epsilon)$ after a finite number of iterations. However, we have $\min_{j\in[k]}\sigma_j=0$ for the non-smooth case, in which case, the performance bound~\eqref{eq:cg-performance-bound} given in Theorem~\ref{thm:conditional-gradient} does not guarantee that Algorithm~\ref{alg:conditional-gradient-upconcave-monotone} converges to an $(1-1/e,\epsilon)$-approximate solution.

\section{Mirror-prox method.}\label{sec:mp}

For the continuous greedy algorithm, given by Algorithm~\ref{alg:conditional-gradient-upconcave-monotone}, to guarantee finite convergence, we need the objective function $F$ to be differentiable and H\"older-smooth. If $F$ is non-differentiable or non-smooth, then the continuous greedy algorithm does not necessarily converge to a desired approximate solution. Motivated by this, we develop an algorithm for \eqref{SFM} that guarantees finite convergence to an approximate solution even when $F$ is non-differentiable or non-smooth. 

We propose a variant of the mirror-prox method, given in Algorithm~\ref{mirror-prox-non-differentiable}, which achieves a finite convergence to a constant approximation. In particular, Algorithm~\ref{mirror-prox-non-differentiable} admits the case when $\min_{j\in [k]}\sigma_j=0$ even if Algorithm~\ref{alg:conditional-gradient-upconcave-monotone} only works only when $\min_{j\in[k]}\sigma_j>0$. Moreover, Algorithm~\ref{mirror-prox-non-differentiable} does not require $F$ to be differentiable unlike Algorithm~\ref{alg:conditional-gradient-upconcave-monotone}.

At the heart of our analysis is the following lemma, which allows us to achieve a constant factor approximation guarantee. It is similar in spirit to the first-order characterization of concave functions. \citet{grad-sfm} have already considered the differentiable case, while our lemma extends the result to the non-differentiable case.
\begin{lemma}\label{lemma:hassani_subgrad}
	If $F$ is nonnegative, monotone and up-concave, then for any $x,y\in\mathbb{R}^d$ and $g\in\partial^\uparrow(F(x))$, we have
	$$F(y)-2F(x)\leq g^\top(y-x).$$
\end{lemma}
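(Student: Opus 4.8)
The plan is to adapt the classical first-order inequality for monotone DR-submodular functions used by \citet{grad-sfm}, replacing the gradient with an up-super-gradient. The subtlety is that the defining inequality \eqref{eq:up-supergradient} for $\bm{g}\in\partial^\uparrow F(\bm{x})$ is only valid along directions comparable to $\bm{x}$, i.e.\ toward points $\bm{z}$ with $\bm{z}\geq\bm{x}$ or $\bm{z}\leq\bm{x}$, whereas $\bm{y}$ and $\bm{x}$ need not be comparable. I would get around this by routing through the lattice operations: set $\bm{x}\vee\bm{y}:=\max\{\bm{x},\bm{y}\}$ and $\bm{x}\wedge\bm{y}:=\min\{\bm{x},\bm{y}\}$ (entrywise), and note $(\bm{x}\vee\bm{y})-\bm{x}=\max\{\bm{y}-\bm{x},\bm{0}\}\geq\bm{0}$, $(\bm{x}\wedge\bm{y})-\bm{x}=\min\{\bm{y}-\bm{x},\bm{0}\}\leq\bm{0}$, and $\big((\bm{x}\vee\bm{y})-\bm{x}\big)+\big((\bm{x}\wedge\bm{y})-\bm{x}\big)=\bm{y}-\bm{x}$.

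The core step is then to apply \eqref{eq:up-supergradient} twice with the same $\bm{g}$: once at the point $\bm{x}\vee\bm{y}\geq\bm{x}$, giving $F(\bm{x}\vee\bm{y})\leq F(\bm{x})+\bm{g}^\top\big((\bm{x}\vee\bm{y})-\bm{x}\big)$, and once at $\bm{x}\wedge\bm{y}\leq\bm{x}$, giving $F(\bm{x}\wedge\bm{y})\leq F(\bm{x})+\bm{g}^\top\big((\bm{x}\wedge\bm{y})-\bm{x}\big)$. Summing these two inequalities and using the identity above yields $F(\bm{x}\vee\bm{y})+F(\bm{x}\wedge\bm{y})-2F(\bm{x})\leq\bm{g}^\top(\bm{y}-\bm{x})$.

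To finish, I would invoke the standing hypotheses on $F$. Monotonicity applied to $\bm{x}\vee\bm{y}\geq\bm{y}$ gives $F(\bm{x}\vee\bm{y})\geq F(\bm{y})$, and nonnegativity gives $F(\bm{x}\wedge\bm{y})\geq 0$; substituting both into the last inequality produces $F(\bm{y})-2F(\bm{x})\leq\bm{g}^\top(\bm{y}-\bm{x})$, as claimed. I do not expect any real obstacle here: the only delicate point is the one already isolated, namely that \eqref{eq:up-supergradient} cannot be applied to the (generally incomparable) pair $(\bm{x},\bm{y})$ directly, which is precisely why the argument passes through $\bm{x}\vee\bm{y}$ and $\bm{x}\wedge\bm{y}$. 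Note also that up-concavity is not needed for the inequality itself; it enters only earlier, to ensure that $\partial^\uparrow F(\bm{x})$ is nonempty.
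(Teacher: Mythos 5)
Your proof is correct and follows essentially the same route as the paper's: apply the up-super-gradient inequality at the comparable points $\bm{x}\vee\bm{y}$ and $\bm{x}\wedge\bm{y}$, sum, and then use monotonicity together with nonnegativity of $F$ to discard $F(\bm{x}\wedge\bm{y})$ and replace $F(\bm{x}\vee\bm{y})$ by $F(\bm{y})$. Your remark that nonnegativity (a standing assumption in the paper, not restated in the lemma) is genuinely needed, while up-concavity is not used in the inequality itself, is accurate.
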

\begin{proof}{Proof.}
	Let $x,y\in\mathcal{X}$. Since $x\wedge y\leq x\leq x\vee y$, it follows from \cref{def:up-supergradient} that we have
	\begin{align*}
		F(x\wedge y)-F(x)&\leq g^\top(x\wedge y-x)\\
		F(x\vee y)-F(x)&\leq g^\top(x\vee y-x)
	\end{align*}
	for every $g\in\partial^\uparrow(F(x))$. Adding up the two inequalities, we obtain
	\[
	-2F(x)+F(x\vee y)+F(x\wedge y)\leq g^\top(x\vee y+x\wedge y-2x)=g^\top(y-x).
	\]
	Since $F$ is nonnegative and monotone, it follows that $F(y)-2F(x)\leq F(x\vee y)-2F(x)\leq g^\top (y-x)$.
	\Halmos
\end{proof}
The general strategy is as follows. Suppose we have a sequence of chosen points $\{x_t : t \in \cT\}$ with up-super-gradients $g(x_t) \in \partial^\uparrow F(x_t)$ for each $t \in \cT$. Then by \cref{lemma:hassani_subgrad}, for any $y \in \cX$ and each $t$ we have $F(y) - 2F(x_t) \leq g(x_t)^\top (y-x_t)$. Multiplying this by nonnegative weights $\gamma_t \geq 0$ and summing, we get
\begin{align*}
\left( \sum_{t \in \cT} \gamma_t \right) F(y) &\leq 2 \sum_{t \in \cT} \gamma_t F(x_t) + \sum_{t \in \cT} \gamma_t g(x_t)^\top (y-x_t)\\
\frac{1}{2} F(y) &\leq \left( \sum_{t \in \cT} \gamma_t \right)^{-1} \sum_{t \in \cT} \gamma_t F(x_t) + \frac{1}{2} \left( \sum_{t \in \cT} \gamma_t \right)^{-1} \sum_{t \in \cT} \gamma_t g(x_t)^\top (y-x_t)\\
&\leq \max_{t \in \cT} F(x_t) + \frac{1}{2} \left( \sum_{t \in \cT} \gamma_t \right)^{-1} \sum_{t \in \cT} \gamma_t g(x_t)^\top (y-x_t),
\end{align*}
where the second inequality holds since the maximum of $F(x_t)$ over $t \in \cT$ is larger than any convex combination. Since this holds for every $y$, we can take the maximum of the right hand side over $y \in \cX$, then the same for the left hand side, so that we have
\begin{subequations}\label{eq:mirror-prox-basic-bound}
	\begin{align}
	\frac{1}{2} \max_{x \in \cX} F(x) &\leq \max_{t \in \cT} F(x_t) + \epsilon_{\cT}\\
	\text{where } \epsilon_\cT &:= \frac{1}{2} \max_{y \in \cX} \left( \sum_{t \in \cT} \gamma_t \right)^{-1} \sum_{t \in \cT} \gamma_t g(x_t)^\top (y-x_t).
	\end{align}
\end{subequations}
This means that we have found a $(1/2,\epsilon_\cT)$-approximate solution for \cref{SFM} (\cref{def:approx-solution}). Of course, we need to ensure $\epsilon_\cT$ is sufficiently small, which we will do by choosing the points $x_t$ via the mirror prox method.

Let $\Phi$ be a mirror map that is 1-strongly convex on $\mathbb{R}^d$ with respect to the norm $\|\cdot\|$. We define
\begin{equation*}
	V_{x}(z) := \Phi(z) - \Phi(x) - \la \grad \Phi(x), z-x \ra
\end{equation*}
which is often referred to as the \emph{Bregman divergence} associated with $\Phi$. Then $V_{x}(z) \geq \frac{1}{2} \|x-z\|^2$. Let $\Prox$ be a proximal operator given by
\begin{equation*}
	\Prox_{x}(\xi) := \argmin_{z \in \mathcal{X}} \left\{ \la \xi, z \ra + V_{x}(z) \right\}.
\end{equation*}
The mirror-prox method is presented in \cref{mirror-prox-non-differentiable}.
\begin{algorithm}%
	\caption{Mirror-prox algorithm for~\eqref{SFM}.}\label{mirror-prox-non-differentiable}
	\begin{algorithmic}
		\STATE{Initialize $v_1\in \argmin_{x\in\mathcal{X}}\Phi(x)$.}
		
		\FOR{$t=1,\ldots,T-1$}
		\STATE{Fetch $\tilde{g}_t$.}
		
		\STATE{$x_t=\operatorname{Prox}_{v_t}(-\gamma_t \tilde{g}_t)$.}
		
		\medskip
		\STATE{Fetch $g_t$.}
		
		\STATE{$v_{t+1}=\operatorname{Prox}_{v_t}(-\gamma_t g_t)$.}
		\ENDFOR
		\RETURN{%
			$x^*\in\argmax \left\{F(x):x\in\{x_t:\ t \in \cT\}\right\}$ where $\cT = \{\lfloor (T-2)/3 \rfloor +1,\ldots,T-1\}$.}
	\end{algorithmic}
\end{algorithm}

At each iteration $t$, we obtain an estimate $\tilde{g}_t$ of an up-super-gradient of $F$ at $v_t$ and an estimate $g_t$ of an up-super-gradient of $F$ at $x_t$. As long as the estimation error, measured by the norm of the gap, is bounded by $\delta$ at each iteration $t$, the overall error will be $O(\delta)$.

Our candidate points will be $\{x_t : t \in \cT\}$ generated from \cref{mirror-prox-non-differentiable}. Therefore we aim to show that
\[ \epsilon_{\cT} = \frac{1}{2} \max_{y \in \cX} \left( \sum_{t \in \cT} \gamma_t \right)^{-1} \sum_{t \in \cT} \gamma_t g(x_t)^\top (y-x_t) \]
is small. Note also that the weights $\gamma_t$ used in \cref{eq:mirror-prox-basic-bound} will be chosen to be exactly the step sizes in \cref{mirror-prox-non-differentiable}. To begin our analysis, we develop a bound for single iteration of \cref{mirror-prox-non-differentiable}.
\begin{lemma}\label{lemma:mp-single-iter}
Given $v_t \in \cX$, let $x_t=\Prox_{v_t}(-\gamma_t \tilde{g}_t)$ and $v_{t+1}=\Prox_{v_t}(-\gamma_t g_t)$ be the points computed at iteration $t$ of \cref{mirror-prox-non-differentiable}. Then
\[ \gamma_t g_t^\top (x-x_t) \leq V_{v_t}(x) - V_{v_{t+1}}(x) +  \frac{1}{2} \left(\gamma_t^2 \| g_t - \tilde{g}_t \|_*^2 - \|x_t - v_t\|^2 \right). \]
\end{lemma}
\begin{proof}{Proof.}
By the optimality condition for $x_t = \Prox_{v_t}(-\gamma_t \tilde{g}_t) = \argmin_{z\in\cX} \left\{ -(\gamma_t \tilde{g}_t + \grad \Phi(v_t))^\top z + \Phi(z) \right\}$, we have for every $x\in\cX$
\begin{align*}
0 &\leq \left( -\gamma_t \tilde{g}_t-\grad\Phi(v_t)+\grad\Phi(x_t) \right)^\top (x - x_t)\\
-\gamma_t \tilde{g}_t^\top (x_t - x) &\leq \left( \grad \phi(x_t) - \grad \Phi(v_t) \right)^\top (x - x_t)\\
&= V_{v_t}(x) - V_{x_t}(x) - V_{v_t}(x_t).
\end{align*}
Similarly, by the optimality condition for $v_{t+1} = \Prox_{v_t}(-\gamma_t g_t) = \argmin_{z\in\cX} \left\{ -(\gamma_t g_t + \grad \Phi(v_t))^\top z + \Phi(z) \right\}$, we have for every $x\in\cX$
\begin{align*}
-\gamma_t g_t^\top (v_{t+1} - x) &\leq V_{v_t}(x) - V_{v_{t+1}}(x) - V_{v_t}(v_{t+1}).
\end{align*}
Set $x = v_{t+1}$ in the first inequality to get $-\gamma_t g_t^\top (x_t - v_{t+1}) \leq V_{v_t}(v_{t+1}) - V_{x_t}(v_{t+1}) - V_{v_t}(x_t)$, then add to the second inequality:
\begin{align*}
-\gamma_t \tilde{g}_t^\top (x_t - v_{t+1}) - \gamma_t g_t^\top (v_{t+1} - x) &\leq V_{v_t}(x) - V_{v_{t+1}}(x) - V_{x_t}(v_{t+1}) - V_{v_t}(x_t)\\
-\gamma_t g_t^\top (x_t - x) &\leq V_{v_t}(x) - V_{v_{t+1}}(x) - V_{x_t}(v_{t+1}) - V_{v_t}(x_t) + \gamma_t \left( g_t - \tilde{g}_t \right)^\top (v_{t+1} - x_t)\\
&\leq V_{v_t}(x) - V_{v_{t+1}}(x) - V_{v_t}(x_t) + \gamma_t \left( g_t - \tilde{g}_t \right)^\top (v_{t+1} - x_t) - \frac{1}{2} \|v_{t+1} - x_t\|^2\\
&\leq V_{v_t}(x) - V_{v_{t+1}}(x) - V_{v_t}(x_t) + \gamma_t \| g_t - \tilde{g}_t \|_* \|v_{t+1} - x_t\| - \frac{1}{2} \|v_{t+1} - x_t\|^2\\
&\leq V_{v_t}(x) - V_{v_{t+1}}(x) - V_{v_t}(x_t) + \frac{\gamma_t^2}{2} \| g_t - \tilde{g}_t \|_*^2
\end{align*}
where the third inequality follows since $\Phi$ is 1-strongly convex, and the last inequality follows since $\gamma_t \| g_t - \tilde{g}_t \|_* \|v_{t+1} - x_t\| - \frac{1}{2} \|v_{t+1} - x_t\|^2 \leq \frac{\gamma_t^2}{2} \| g_t - \tilde{g}_t \|_*^2$. Then again by 1-strong convexity of $\Phi$, we have
\[ -\gamma_t g_t^\top (x_t - x) \leq V_{v_t}(x) - V_{v_{t+1}}(x) +  \frac{1}{2} \left(\gamma_t^2 \| g_t - \tilde{g}_t \|_*^2 - \|x_t - v_t\|^2 \right) \]
as required.
\Halmos
\end{proof}

The next lemma incorporates H\"{o}lder-smoothness of $F$ into our bound.
\begin{lemma}\label{lemma:mp-holder-smooth}
Suppose $F$ is H\"{o}lder-smooth (\cref{def:holder-smooth}) with function $h(z) = \sum_{j \in [k]} \beta_j z^{\sigma_j}$; define $\sigma = \min_{j\in[k]} \sigma_j$, $\bar{\sigma} = \max_{j \in [k]} \sigma_j$, and assume that $0 \leq \sigma \leq \bar{\sigma} \leq 1$. Let $g(x_t) \in \partial^{\uparrow}(x_t)$, $g(v_t) \in \partial^{\uparrow}(v_t)$ be selections of up-super-gradients such that
\[ \|g(x_t) - g(v_t)\|_* \leq h(\|x_t - v_t\|). \]
Furthermore, suppose that $\|g_t - g(x_t)\|_* \leq \delta$ and $\|\tilde{g}_t - g(v_t)\|_* \leq \delta$, and that $\gamma_t = 1/\left( \sqrt{2} t^{\frac{1-\sigma}{2}} \sum_{j \in [k]} \beta_j \right)$. Then
\[ \gamma_t^2 \| g_t - \tilde{g}_t \|_*^2 - \|x_t - v_t\|^2 = \frac{4\delta^2}{t^{1-\sigma} \left( \sum_{j \in [k]} \beta_j \right)^2} + \frac{1}{t}. \]
\end{lemma}
\begin{proof}{Proof.}
First observe that
\begin{align*}
\| g_t - \tilde{g}_t \|_* &\leq \|g(x_t) - g(v_t)\|_* + \|g_t - g(x_t)\|_* + \|\tilde{g}_t - g(v_t)\|_*\\
&\leq h(\|x_t - v_t\|_*) + 2\delta\\
&= \sum_{j \in [k]} \beta_j \|x_t - v_t\|_*^{\sigma_j} + 2\delta.
\end{align*}
Therefore
\begin{align*}
\gamma_t^2 \| g_t - \tilde{g}_t \|_*^2 &\leq \gamma_t^2 \left( \sum_{j \in [k]} \beta_j \|x_t - v_t\|_*^{\sigma_j} + 2\delta \right)^2 \leq 8 \delta^2 \gamma_t^2 + 2 \gamma_t^2 \left( \sum_{j \in [k]} \beta_j \|x_t - v_t\|_*^{\sigma_j} \right)^2.
\end{align*}
Now, we have $\gamma_t^2 \| g_t - \tilde{g}_t \|_*^2 - \|x_t - v_t\|^2 = 8 \delta^2 \gamma_t^2 + 2 \gamma_t^2 \left( \sum_{j \in [k]} \beta_j \|x_t - v_t\|^{\sigma_j} \right)^2 - \|x_t - v_t\|^2$. Examining the terms which involve $\|x_t - v_t\|$, and recalling that $\gamma_t = 1/\left( \sqrt{2} t^{\frac{1-\sigma}{2}} \sum_{j \in [k]} \beta_j \right)$, we have
\begin{align*}
2 \gamma_t^2 \left( \sum_{j \in [k]} \beta_j \|x_t - v_t\|^{\sigma_j} \right)^2 - \|x_t - v_t\|^2 &= \frac{1}{t^{1-\sigma}} \left( \frac{\sum_{j \in [k]} \beta_j \|x_t - v_t\|^{\sigma_j}}{\sum_{j \in [k]} \beta_j} \right)^2 - \|x_t - v_t\|^2.
\end{align*}
Define
\[ s_t(r) = \frac{1}{t^{1-\sigma}} \left( \frac{\sum_{j \in [k]} \beta_j r^{\sigma_j}}{\sum_{j \in [k]} \beta_j} \right)^2 - r^2. \]
We are interested in bounding $s_t(\|x_t-v_t\|)$, which can be bounded by $\sup_{r \geq 0} s_t(r)$. For $r \leq 1$, we have $s_t(r) \leq \frac{1}{t^{1-\sigma}} r^{2\sigma} - r^2$, and for $r \geq 1$, we have $s_t(r) \leq \frac{1}{t^{1-\sigma}} r^{2\bar{\sigma}} - r^2$. We therefore have
\[ \sup_{r \geq 0} s_t(r) \leq \max\left\{ \sup_{r \geq 0} \left( \frac{1}{t^{1-\sigma}} r^{2\sigma} - r^2 \right), \sup_{r \geq 0} \left( \frac{1}{t^{1-\sigma}} r^{2\bar{\sigma}} - r^2 \right) \right\}. \]
Let $\tilde{s}_t(r) = \frac{1}{t^{1-\sigma}} r^{2\tilde{\sigma}} - r^2$ for any $\tilde{\sigma} \in [\sigma, \bar{\sigma}]$. Note that if $\tilde{\sigma}=1$ then $\tilde{s}_t(r) = r^2 (1/t^{1-\sigma} - 1) \leq 0$ since $t \geq 1$. Assume that $\tilde{\sigma} < 1$. Then
\[ \tilde{s}_t'(r) = 2 r \left( \frac{\tilde{\sigma}}{t^{1-\sigma}} r^{2(\tilde{\sigma}-1)} - 1 \right). \]
When $r \leq (t^{1-\sigma}/\tilde{\sigma})^{\frac{1}{2(\tilde{\sigma}-1)}}$ then $\tilde{s}_t'(r) \geq 0$ and $r \geq (t^{1-\sigma}/\tilde{\sigma})^{\frac{1}{2(\tilde{\sigma}-1)}}$ then $\tilde{s}_t'(r) \leq 0$, therefore $\tilde{s}_t(r)$ is maximized at $r = (t^{1-\sigma}/\tilde{\sigma})^{\frac{1}{2(\tilde{\sigma}-1)}}$, with value
\[ \max_{r \geq 0} \tilde{s}_t(r) = \frac{1}{t^{1-\sigma}} \frac{t^{\frac{\tilde{\sigma}(1-\sigma)}{\tilde{\sigma}-1}}}{\tilde{\sigma}^{\frac{\tilde{\sigma}}{\tilde{\sigma}-1}}} - \frac{t^{\frac{1-\sigma}{\tilde{\sigma}-1}}}{\tilde{\sigma}^{\frac{1}{\tilde{\sigma}-1}}} = \frac{1}{t^{\frac{1-\sigma}{1-\tilde{\sigma}}}} \left( \tilde{\sigma}^{\frac{\tilde{\sigma}}{1-\tilde{\sigma}}} - \tilde{\sigma}^{\frac{1}{1-\tilde{\sigma}}} \right). \]
Since $0 \leq \sigma \leq \tilde{\sigma} \leq 1$, $\frac{1-\sigma}{1-\tilde{\sigma}} \geq 1$ and $\tilde{\sigma}^{\frac{\tilde{\sigma}}{1-\tilde{\sigma}}} - \tilde{\sigma}^{\frac{1}{1-\tilde{\sigma}}} \leq 1$, therefore
\[ \max_{r \geq 0} \tilde{s}_t(r) = \frac{1}{t^{\frac{1-\sigma}{1-\tilde{\sigma}}}} \left( \tilde{\sigma}^{\frac{\tilde{\sigma}}{1-\tilde{\sigma}}} - \tilde{\sigma}^{\frac{1}{1-\tilde{\sigma}}} \right) \leq \frac{1}{t}. \]
But this implies that
\[ 2 \gamma_t^2 \left( \sum_{j \in [k]} \beta_j \|x_t - v_t\|^{\sigma_j} \right)^2 - \|x_t - v_t\|^2 \leq \sup_{r \geq 0} s_t(r) \leq \max\left\{ \sup_{r \geq 0} \left( \frac{1}{t^{1-\sigma}} r^{2\sigma} - r^2 \right), \sup_{r \geq 0} \left( \frac{1}{t^{1-\sigma}} r^{2\bar{\sigma}} - r^2 \right) \right\} \leq \frac{1}{2t}. \]
Thus
\[ \gamma_t^2 \| g_t - \tilde{g}_t \|_*^2 - \|x_t - v_t\|^2 = \frac{4\delta^2}{t^{1-\sigma} \left( \sum_{j \in [k]} \beta_j \right)^2} + \frac{1}{t} \]
as required.
\Halmos
\end{proof}

\begin{corollary}\label{cor:mp-bound}
Under the assumptions of \cref{lemma:mp-single-iter,lemma:mp-holder-smooth}, and additionally assuming that $\max_{x,y \in \cX} \|x-y\| \leq D$, we have the following bound on iterates of \cref{mirror-prox-non-differentiable}: for any $y \in \cX$,
\begin{align*}
\gamma_t g(x_t)^\top (y - x_t) \leq V_{v_t}(y) - V_{v_{t+1}}(y) + \gamma_t \delta D + \frac{2\delta^2}{t^{1-\sigma} \left( \sum_{j \in [k]} \beta_j \right)^2} + \frac{1}{2t}.
\end{align*}
\end{corollary}
\begin{proof}{Proof.}
First we combine the bounds in \cref{lemma:mp-single-iter,lemma:mp-holder-smooth} to get for any $y \in \cX$
\begin{align*}
	\gamma_t g_t^\top (y - x_t) \leq V_{v_t}(y) - V_{v_{t+1}}(y) + \frac{2\delta^2}{t^{1-\sigma} \left( \sum_{j \in [k]} \beta_j \right)^2} + \frac{1}{2t}.
\end{align*}
Now observe that
\begin{align*}
\gamma_t g(x_t)^\top (y - x_t) &= \gamma_t (g_t - g_t + g(x_t))^\top (y - x_t)\\
&= \gamma_t g_t^\top (y - x_t) + \gamma_t (g(x_t) - g_t)^\top (y - x_t)\\
&\leq \gamma_t g_t^\top (y - x_t) + \gamma_t \|g(x_t) - g_t\|_* \|y-x_t\|\\
&\leq \gamma_t g_t^\top (y - x_t) + \gamma_t \delta D,
\end{align*}
where the first inequality follows from H\"{o}lder's inequality and the second follows since $\|g(x_t) - g_t\|_* \leq \delta$, $\|y-x_t\| \leq D$. Combining this with the above inequality gives the result.
\Halmos
\end{proof}

We are now ready to give our convergence result.
\begin{theorem}\label{thm:mprox-complexity}
	Suppose the following hold:
	\begin{itemize}
		\item $F$ is monotone and up-concave, and $\max_{x,y \in \cX} \|x-y\| \leq D$.
		
		\item $F$ is H\"{o}lder-smooth (\cref{def:holder-smooth}) with function $h(z) = \sum_{j \in [k]} \beta_j z^{\sigma_j}$.
		
		\item $0 \leq \sigma \leq \max_{j \in [k]} \sigma_j \leq 1$, where $\sigma := \min_{j\in[k]} \sigma_j$.
		
		\item Furthermore, assume that $\sup_{x \in \cX} V_{v_1}(x) \leq D$.
	\end{itemize}
	In \cref{mirror-prox-non-differentiable}, at each iteration $t$ choose $\gamma_t = 1/\left( \sqrt{2} t^{\frac{1-\sigma}{2}} \sum_{j \in [k]} \beta_j \right)$, and $\tilde{g}_t,g_t$ such that $\|\tilde{g}_t - g(v_t)\|_* \leq \delta$ and $\|g_t - g(x_t)\|_* \leq \delta$, where $g(x_t) \in \partial^{\uparrow}(x_t)$, $g(v_t) \in \partial^{\uparrow}(v_t)$ are selections of up-super-gradients such that $\|g(x_t) - g(v_t)\|_* \leq h(\|x_t - v_t\|)$.  Then there exists some constant $c_\sigma > 0$ depending only on $\sigma$ such that
	\begin{align*}
		\frac{1}{2} \max_{x \in \cX} F(x) &\leq \max_{t \in [T]} F(x_t) + 2 \sqrt{2} \left( \sum_{j \in [k]} \beta_j \right) \frac{D}{T^{\frac{1+\sigma}{2}}} + \frac{\delta D}{2} + \frac{4 \sqrt{2} \delta^2}{\sigma \left(\sum_{j \in [k]} \beta_j\right) T^{\frac{1-\sigma}{2}}}\\
		&\quad + 4 \sqrt{2} \left(\sum_{j \in [k]} \beta_j \right) \frac{1}{T^{\frac{1+\sigma}{2}}}.
	\end{align*}
\end{theorem}
\begin{proof}{Proof.}
\cref{lemma:hassani_subgrad} gives \eqref{eq:mirror-prox-basic-bound} which states that
\begin{align*}
	\frac{1}{2} \max_{x \in \cX} F(x) \leq \max_{t \in [T]} F(x_t) + \frac{1}{2} \max_{y \in \cX} \left( \sum_{t \in [T]} \gamma_t \right)^{-1} \sum_{t \in [T]} \gamma_t g(x_t)^\top (y-x_t).
\end{align*}
Summing the bound in \cref{cor:mp-bound} over $t \in [T]$, we get
\begin{equation}\label{eq:mp-proof-bound}
\sum_{t \in \cT} \gamma_t g(x_t)^\top (y-x_t) \leq V_{v_1}(y) + \left( \sum_{t \in \cT} \gamma_t \right) \delta D + \left( \frac{\sqrt{2} \delta}{\sum_{j \in [k]} \beta_j} \right)^2 \sum_{t \in \cT} \frac{1}{t^{1-\sigma}} + \frac{1}{2} \sum_{t \in \cT} \frac{1}{t}.
\end{equation}

To bound the sum $\sum_{t \in \cT}\gamma_t$, we consider $\sum_{t \in \cT} t^{-\frac{1-\sigma}{2}}$. Let $T_0 = \lfloor (T-2)/3 \rfloor +1$, then $\cT = \{T_0,\ldots,T-1\}$.
Since $1-\sigma\geq 0$, we know that $1/t^{\frac{1-\sigma}{2}}$ is a decreasing function. Noting that $T_0 \leq (T+1)/3 \leq T/2$ for $T \geq 1$,
\begin{equation*}
	\sum_{t=T_0}^T t^{-\frac{1-\sigma}{2}} \geq \int_{T_0}^T t^{-\frac{1-\sigma}{2}} dt  = \frac{2}{1+\sigma} \left( T^{\frac{1+\sigma}{2}} - T_0^{\frac{1+\sigma}{2}} \right) \geq T^{\frac{1+\sigma}{2}} - (T/2)^{\frac{1+\sigma}{2}} \geq \frac{T^{\frac{1+\sigma}{2}}}{4}.
\end{equation*}
Similarly, we have
\begin{align*}
	\sum_{t=T_0}^{T-1} \frac{1}{t^{1-\sigma}} &\leq \int_{T_0-1}^{T-1} t^{-(1-\sigma)} dt = \frac{(T-1)^\sigma - \lfloor (T-2)/3 \rfloor^\sigma}{\sigma} \leq \frac{T^\sigma}{\sigma}\\
	\sum_{t=T_0}^{T-1} \frac{1}{t} &\leq \frac{T-1-T_0}{T_0} = \frac{T-1-\lfloor (T-2)/3 \rfloor}{\lfloor (T-2)/3 \rfloor+1} \leq 2.
\end{align*}
Dividing both sides of \eqref{eq:mp-proof-bound} by $\sum_{t \in [T]}\gamma_t$ and substituting in the lower bound, as well as the upper bounds for the other sum terms, we get
\begin{align*}
\left( \sum_{t \in [T]} \gamma_t \right)^{-1} \sum_{t \in [T]} \gamma_t g(x_t)^\top (y-x_t) &\leq 4 \sqrt{2} \left( \sum_{j \in [k]} \beta_j \right) \frac{D}{T^{\frac{1+\sigma}{2}}} + \delta D + \frac{8 \sqrt{2} \delta^2}{\sigma \left(\sum_{j \in [k]} \beta_j\right) T^{\frac{1-\sigma}{2}}}\\
&\quad + 8 \sqrt{2} \left(\sum_{j \in [k]} \beta_j \right) \frac{1}{T^{\frac{1+\sigma}{2}}}. 
\end{align*}
This gives the result.
\Halmos
\end{proof}

By Theorem~\ref{thm:mprox-complexity}, we obtain the following approximation guarantee for using Algorithm~\ref{mirror-prox-non-differentiable} to solve \cref{SFM}.
\begin{corollary}\label{cor:mp-convergence-rate}
Suppose the conditions of \cref{thm:mprox-complexity} hold. If $\delta = O(\epsilon)$, then Algorithm~\ref{mirror-prox-non-differentiable} returns an $(1/2,\epsilon)$-approximate solution after $O(1/\epsilon^{2/(1+\sigma)})$ iterations.
\end{corollary}
By Corollary~\ref{cor:mp-convergence-rate}, Algorithm~\ref{mirror-prox-non-differentiable} converges to an $(1/2,\epsilon)$-approximate solution after a finite number of iterations even when $\min_{j\in[k]}\sigma_j=0$.

\section{Robust DR-submodular maximization.}\label{sec:robust}

In this section, we revisit \eqref{eq:robust-SFM-c} from \cref{sec:prelim-motivation}. Recall that the objective from \cref{eq:robust-SFM-c} is
\begin{align}\label{eq:robust-SFM-c-objective}
F(x) := \min_{p \in \cP} f(x, p),
\end{align}
where $f(x,p)$ is a class of functions parametrized by a given set $\cP$. We examine the application of non-smooth methods from \cref{sec:mp} as well as smooth methods from \cref{sec:cg} after appropriate smoothing.

\subsection{Non-smooth approach via \cref{mirror-prox-non-differentiable}.} We state the precise assumptions on our objective function as follows.
\begin{assumption}\label{ass:robust-SFM}
For each parameter $p \in \cP$, $f(x,p)$ is monotone, non-negative and DR-submodular. Furthermore, each $f(x,p)$ is differentiable and $L$-Lipschitz continuous with respect to the norm $\|\cdot\|$ for some $L>0$, i.e.,
$|f(x,p)- f(y,p)|\leq L\|x-y\|$ for any $x,y\in\mathbb{R}^d$ and each $i\in[n]$. In particular, $L$ is independent of the parameter $p \in \cP$.
\end{assumption}

Under \cref{ass:robust-SFM}, the objective \eqref{eq:robust-SFM-c-objective} satisfies a number of useful properties.
\begin{lemma}\label{lemma:robust-SFM-objective}
The following hold for $F$ in \cref{eq:robust-SFM-c-objective} under \cref{ass:robust-SFM}:
\begin{itemize}
\item $F$ is monotone, non-negative and up-concave.

\item For any given $x$, up-super-gradients may be found given a minimization oracle for $f(x,p)$ over $p \in \cP$:
\[ \grad_{x} f(x, p_{x}^*) \in \partial^\uparrow F(x) \]
for any $p_{x}^* \in \cP$ that minimizes $f(x,p)$.

\item $\|\grad_{x} f(x, p)\|_* \leq L$ for any $x \in \cX$ and $p \in \cP$, and $F$ is $L$-Lipschitz continuous, i.e., it satisfies~\eqref{eq:holder-smooth-nd-1} with norm $\|\cdot\|$ and function $h=2L$, a constant function.
\end{itemize}
\end{lemma}
We provide the proof in Appendix~\ref{proofs:robust}.

Due to \cref{lemma:robust-SFM-objective}, we may apply Algorithm~\ref{mirror-prox-non-differentiable} for solving \eqref{eq:robust-SFM-c}.
Under a particular speficiation of $g_t$ and $g_{t+1/2}$ in \cref{mirror-prox-non-differentiable}, Theorem~\ref{thm:mprox-complexity} and Corollary~\ref{cor:mp-convergence-rate} gives following convergence result. 
\begin{theorem}\label{thm:robust-submodular-guarantee}
Suppose \cref{ass:robust-SFM} holds, and we run \cref{mirror-prox-non-differentiable} with $\tilde{g}_t = \grad_{x} f(v_t, \tilde{p}_t)$ and $g_t = \grad_{x} f(x_t, p_t)$, where $\tilde{p}_t,p_t$ are minimizers of $f(v_t,p)$ and $f(x_t,p)$ over $p \in \cP$ respectively. Then \cref{mirror-prox-non-differentiable} returns a solution $\hat{x}\in\mathcal{X}$ such that
$$F(\hat{x}) \geq \frac{1}{2}\sup_{x\in\mathcal{X}} F(x) - \frac{24(D+1)L}{\sqrt{T}}$$
where $\sup_{x,y} V_{y}(x)\leq D$. Therefore $\hat{x}$ is a $(1/2,\epsilon)$-approximate solution to \eqref{eq:robust-SFM-c}, obtained after $O(1/\epsilon^{2})$ iterations.
\end{theorem}
When $f(x,p) = \sum_{i \in [n]} p_i f_i(x)$ and $\cP$ is a set over which exact linear minimization is easy, then we do not need to account for errors in the up-super-gradient computation. This is the case, e.g., for the sets examined by \citet{equator,dro-sfm}. However, if we cannot perform exact minimization, but instead can perform approximate minimization, then the extra error terms in \cref{thm:mprox-complexity,cor:mp-convergence-rate} come into effect.

\subsubsection{Usage in discrete multiobjective problem.} Suppose we are given monotone submodular functions $f_{1,0},\ldots,f_{n,0}$ over ground set $V = [d]$ as well as values $v_1,\ldots,v_n \in \bbR$. Given a matroid $\cM$, we would like to find a set $S \in \cM$ such that $f_{i,0}(S) \geq v_i$ for all $i \in [n]$, or certify that none exists. This multiobjective problem has been studied by \citet{swap-rounding,Udwani2021}. We will show how usage of the mirror-prox method (\cref{mirror-prox-non-differentiable}) can be used within the framework of \citep{swap-rounding,Udwani2021} to obtain fast algorithms with improved guarantees, in the setting where we have cardinality constraints $\cM = \{S \subseteq [d] : |S| \leq k\}$.

We now describe the framework from \citep{swap-rounding,Udwani2021}, and point out adaptations where the mirror-prox method may be used (specifically, for implementing step 2 below). We first transform the problem slightly by setting $v_i \leftarrow \max\{0,v_i - f_{i,0}(\emptyset)\}$ and $f_i(S) = \min\{v_i, f_{i,0}(S) - f_{i,0}(\emptyset)\}$, which guarantees $0 \leq f_i(S) \leq v_i$ for each $i \in [n]$ and $S \subseteq V$. If there exists a set $S \in \cM$ solving the problem, then $f_i(S) = v_i$ for all $i \in [n]$. If $v_i = 0$, then we can ignore this particular $v_i$, hence define $v = \min_{i \in [n]} v_i$, and assume $v > 0$.

\paragraph{Step 1: guessing a good starting set.} For some $\epsilon > 0$ to be chosen later, we seek a set $S_1$ such that $f_i(S_1 \cup \{j\}) - f_i(S_1) \leq \epsilon^3 v_i$ for all $i \in [n]$ and $j \in [d]$. To do this, we start with $S_1 = \emptyset$, and pass through the set $V$ in order, adding $j$ to $S_1$ if there exists some $i \in [n]$ such that $f_i(S_1 \cup \{j\}) - f_i(S_1) > \epsilon^3 v_i$. Let $j_1 < \ldots < j_{s_i}$ be the elements in $S_1$ that were added due to function $i$, i.e., $f_i(\{j_1,\ldots,j_m,j_{m+1}\}) - f_i(\{j_1,\ldots,j_m\}) > \epsilon^3 v_i$ for $m=0,1,\ldots,s_i-1$. Adding each of these inequalities up, we have
\[ f_i(S_1) \geq f_i(\{j_1,\ldots,j_{s_i}\}) - f_i(\emptyset) > \epsilon^3 v_i s_i. \]
Since $f_i(S_1) \leq v_i$ by construction, we must have $s_i \leq 1/\epsilon^3$, and hence $|S_1| \leq n/\epsilon^3$. Note that we will choose $\epsilon$ in such a way that $n/\epsilon^3 \leq k$.

In later steps, we will augment $S_1$ with a random set $S$ with high probability guarantees that $f_i(S \cup S_1) \geq \gamma v_i$ for some $\gamma \in (0,1)$ to be determined. Since $f_{i,0}$ are assumed monotone, we can ignore functions for which $f_i(S_1) > \beta v_i$, where $\beta \in (0,1)$ will be chosen later. Therefore we assume $f_i(S_1) \leq \beta v_i$ for all $i \in [n]$.

\paragraph{Step 2: use a continuous algorithm to find a good fractional point.} Let $F_i$ be the multilinear extension for $f_i$; note that these are monotone, nonnegative, and DR-submodular functions over $[0,1]^d$. We define marginal functions
\[ G_i(x) := F_i(x \vee \bm{1}(S_1)) - f_i(S_1) \]
for each $i \in [n]$, where $\bm{1}(S_1)$ is the vector for which all indices corresponding to $j \in S_1$ have value $1$, while all other entries have value $0$, and $x \vee \bm{1}(S_1)$ is the component-wise maximum of $x$ and $\bm{1}(S_1)$. Each $G_i$ is also monotone, nonnegative, and DR-submodular. We define $\cX = \{x \in [0,1]^d : \bm{1}^\top x \leq k \}$ to be the polymatroid associated with $\cM$. We now aim to optimize the robust DR-submodular function
\begin{align*}
	\max_{x : x \vee \bm{1}(S_1) \in \cX} \min_{i \in [n]} \frac{G_i(x)}{v_i}.
\end{align*}
\citet{Udwani2021} provides an algorithm based on multiplicative weights which achieves an $((1-1/e)^2, \delta)$-approximation in $O(1/\delta^2)$ iterations. On the other hand, using the mirror-prox method (\cref{thm:robust-submodular-guarantee}), we may achieve a $(1/2, \delta)$-approximation in $O(1/\delta^2)$ iterations (noting that $(1-1/e)^2 \approx 0.4 < 1/2$). Assume now that we use any algorithm that achieves a $(\alpha,\delta)$-guarantee. Then we find a point $\bar{x}$ such that
\[ \frac{G_i(\bar{x})}{v_i} \geq \alpha \max_{x : x \vee \bm{1}(S_1) \in \cX} \min_{i \in [n]} \frac{G_i(x)}{v_i} - \delta. \]
The next lemma relates the lower bound to our problem.
\begin{lemma}\label{lemma:fractional-bound}
	Suppose $F$ is DR-submodular with $F(0) = 0$. Then for any $x \in [0,1]^d$ with $\|x\|_1 = k$ and any $k_1 \in [0,k]$ we have
	\[ F((k_1/k) x) \geq (k_1/k) F(x). \]
	Assume that $|S_1| \leq k$. Then at least one of the following statements are true (they may both be true):
	\begin{itemize}
		\item there is no set $S \in \cM$ for which $f_i(S) \geq v_i$ for all $i \in [n]$; or
		\item there exists $\tilde{x}$ such that $\tilde{x} \vee \bm{1}(S_1) \in \cX$ and for each $i \in [n]$ we have
		\[ G_i(\tilde{x}) \geq \left( 1-|S_1|/k \right) (v_i - f_i(S_1)), \]
		and consequently we have $\max_{x : x \vee \bm{1}(S_1) \in \cX} \min_{i \in [n]} \frac{G_i(x)}{v_i} \geq \left( 1-|S_1|/k \right) \min_{i \in [n]} \left( 1 - f_i(S_1)/v_i \right)$.
	\end{itemize}
\end{lemma}
\begin{proof}{Proof of \cref{lemma:fractional-bound}.}
	The first inequality follows from up-concavity of $F$ and because $x \geq 0$.
	
	Consider any $S \in \cM$ and vector $\bm{1}(S \setminus S_1)$. We have $k_2 := \|\bm{1}(S \setminus S_1)\|_1 \leq k$, while $\|\bm{1}(S \setminus S_1) \vee \bm{1}(S_1)\|_1 = k_2 + |S_1| \geq k$. Notably,
	\[ \left\| \left( \frac{k-|S_1|}{k_2} \bm{1}(S \setminus S_1) \right) \vee \bm{1}(S_1) \right\|_1 = k \quad \text{and} \quad \frac{k-|S_1|}{k_2} \geq \frac{k-|S_1|}{k}. \]
	Apply the first inequality to $G_i(x) = F_i(x \vee \bm{1}(S_1)) - f_i(S_1)$, we get
	\begin{align*}
		G_i\left( \frac{k-|S_1|}{k_2} \bm{1}(S \setminus S_1) \right) &\geq \frac{k-|S_1|}{k_2} G_i\left( \bm{1}(S \setminus S_1) \vee \bm{1}(S_1) \right)\\
		&= \frac{k-|S_1|}{k_2} (f_i\left( S \cup S_1 \right) - f_i(S_1))\\
		&\geq \frac{k-|S_1|}{k} (f_i\left( S \cup S_1 \right) - f_i(S_1)).
	\end{align*}
	Choosing $S \in \cM$ such that $f_i(S) \geq v_i$ and dividing both sides by $v_i$ for all $i \in [n]$, we get the result.
	\Halmos
\end{proof}
By \cref{lemma:fractional-bound} the point $\bar{x}$ satisfies
\begin{align*}
	\frac{G_i(\bar{x})}{v_i} &\geq \alpha (1-|S_1|/k) \min_{i \in [n]} (1 - f_i(S_1)/v_i) - \delta\\
	&\geq \alpha (1-|S_1|/k) (1-\beta) - \delta
\end{align*}
where the second inquality follows as $f_i(S_1)/v_i \leq \beta$ for all $i \in [n]$.

\paragraph{Step 3: rounding.} Let $g_i(S) = f_i(S \cup S_1) - f_i(S_1)$. Note that $G_i$ are multilinear extensions of $g_i$, and that $g_i(\{j\}) \leq \epsilon^3 v_i$ for all $i \in [n]$ and $j \in [d]$ by construction of $S_1$. The swap rounding algorithm of \citet{swap-rounding} finds a random set $S$ such that $S \cup S_1 \in \cM$ (with probability 1) and
\begin{align*}
	\bbP\left[ g_i(S) < (1-\epsilon) G_i(\bar{x}) \right] &= \bbP\left[ \frac{g_i(S)}{\epsilon^3 v_i} > (1-\epsilon) \frac{G_i(\bar{x})}{\epsilon^3 v_i} \right]\\
	&\leq \exp\left( -\frac{G_i(\bar{x})}{8 \epsilon v_i} \right)\\
	&\leq \exp\left( -\frac{\alpha (1-|S_1|/k) (1-\beta) - \delta}{8 \epsilon} \right)
\end{align*}
where the last inequality follows since $\frac{G_i(\bar{x})}{v_i} \geq \alpha (1-|S_1|/k) (1-\beta) - \delta$.

\paragraph{Guarantee.} Applying a union bound and taking the complement, we get
\begin{align*}
	&\bbP\left[ g_i(S) \geq (\alpha (1-\epsilon)(1-n/(k \epsilon^3)) (1-\beta) - \delta) v_i \ \forall i \in [n] \right]\\
	&\geq \bbP\left[ g_i(S) \geq (\alpha (1-\epsilon) (1-|S_1|/k) (1-\beta) - \delta) v_i \ \forall i \in [n] \right]\\
	&\geq \bbP\left[ g_i(S) \geq (1-\epsilon) G_i(\bar{x}) \ \forall i \in [n] \right]\\
	&\geq 1 - n \exp\left( -\frac{\alpha (1-|S_1|/k) (1-\beta) - \delta}{8 \epsilon} \right)\\
	&\geq 1 - n \exp\left( -\frac{\alpha (1-n/(k\epsilon^3)) (1-\beta) - \delta}{8 \epsilon} \right),
\end{align*}
where the first and last inequalities follow since $|S_1| \leq n/\epsilon^3$. Notice that
\begin{align*}
	\exp\left( -\frac{\alpha (1-n/(k\epsilon^3)) (1-\beta) - \delta}{8 \epsilon} \right) &\leq (1-\rho)/n\\
	\frac{\alpha (1-n/(k\epsilon^3)) (1-\beta) - \delta}{8 \epsilon} &\geq \log(n) - \log(1-\rho)\\
	\frac{\alpha (1-\beta) - \delta}{8 \epsilon} - \frac{n}{k} \frac{\alpha (1-\beta)}{8 \epsilon^4} &\geq \log(n) - \log(1-\rho)\\
	\epsilon^3 \left( \frac{\alpha (1-\beta) - \delta}{8 (\log(n) - \log(1-\rho))} - \epsilon \right) &\geq \frac{n}{k} \frac{\alpha (1-\beta)}{8 (\log(n) - \log(1-\rho))}\\
	\iff \epsilon^3(C_n-D_n - \epsilon) &\geq C_n (n/k)\\
	&\quad \text{ where } C_n := \frac{\alpha (1-\beta)}{8 (\log(n) - \log(1-\rho))}, \ D_n = \frac{\delta}{8 (\log(n) - \log(1-\rho))}.
\end{align*}
If $\epsilon = 3 (C_n - D_n) /4 = O(1/\log(n))$, then the left hand side maximized, so that
\[ \frac{27 (C_n - D_n)^4}{256} \geq C_n (n/k) \iff \frac{n}{C_n^3 (1 - D_n/C_n)^4 k} \leq \frac{27}{256}. \]
If $\frac{n \log^3(n)}{k} = o(1)$ and $\delta < \alpha(1-\beta)$, then this is guaranteed in the limit. Furthermore, $1-(n/(k \epsilon^3)) \to 1$ as well. In summary, we have the following guarantee.

\begin{theorem}
	Let $\epsilon = 3 (C_n - D_n) /4 = \frac{3 (\alpha (1-\beta) - \delta)}{32 (\log(n) - \log(1-\rho))}$. Then the procedure generates a random $S$ such that $S \cup S_1 \in \cM$ with probability $1$ and with probability $\rho > 0$ we have
	\[ \forall i \in [n], \quad f_i(S \cup S_1) \geq (\alpha h(n,k) (1-\beta) - \delta) v_i + (1-\alpha h(n,k) (1-\beta) + \delta) f_i(S_1) \quad \text{or} \quad f_i(S \cup S_1) \geq \beta v_i, \]
	where $h(n,k)$ is a function such that $h(n,k) \to 1$ when $\frac{n \log^3(n)}{k} = o(1)$.
\end{theorem}

\subsection{Smooth approach via \cref{alg:conditional-gradient-upconcave-monotone} for distributionally robust DR-submodular maximization under Wasserstein ambiguity.}\label{sec:dro}

We now revisit the distributionally robust model \eqref{eq:DR-SFM} from \cref{sec:prelim-motivation}, where we have a class of DR-submodular functions $\tilde{f}(x,\xi)$ with parameters $\xi \in \Xi$. Recall that we have a distribution $Q=\sum_{i\in[n]} q_i\delta_{\xi^i}$ over $\Xi$ and an  ambiguity set $\cB$ defined by the 2-Wasserstein ball of radius $\theta$ around $Q$. By \cref{prop:reformulation-1}, this admits a reformulation into the form of \eqref{eq:robust-SFM-c}, where the objective is defined by \cref{eq:DR-SFM-wasserstein-form}:
\begin{subequations}\label{eq:DR-SFM-wasserstein}
\begin{align}
\max_{x \in \cX} F(x), &\quad \text{where } F(x) := \inf_{p \in \cP} f(x,p),\\
 p &:= (\zeta^1;\cdots;\zeta^n) \in \cP := \left\{(\zeta^1;\cdots;\zeta^n): \sum_{i \in [n]} q_i \|\xi^i - \zeta^i\|_2^2 \leq \theta^2,\ \zeta^i\in\Xi\ \ \forall i\in[n] \right\}\\
f(x,p) &:= \sum_{i \in [n]} q_i \tilde{f}(x, \zeta^i).
\end{align}
\end{subequations}
We will impose some assumptions on our setting. Denote
\begin{equation}\label{convex-domain:gamma}
	\Gamma=\bigcup_{i\in[n]}\left\{\zeta\in \Xi:\ q_i\|\xi^i-\zeta\|_2^2\leq \theta^2\right\}.
\end{equation}
Note that $\Gamma$ is bounded as well.
\begin{assumption}\label{ass:DR-SFM}
The functions $\tilde{f}(x,\xi)$ satisfy:
\begin{itemize}
	\item for each $\xi \in \Xi$, $\tilde{f}(x,\xi)$ is monotone, nonnegative, DR-submodular with respect to $x$.
	
	\item for each $x \in \cX$, $\tilde{f}(x,\xi)$ is convex with respect to $\xi$.
	
	\item $\cX$ is bounded, and $\tilde{f}(x,\zeta)$ is $L_2$-Lipschitz continuous in $\zeta$ over $\Gamma$, where $L_2 := 2\sup_{(x,\xi)\in\mathcal{X}\times\Gamma} f(x,\xi) < \infty$. (Note that $L_2$ is bounded since $\cX$ and $\Gamma$ are.)
	
	\item for every $\xi\in\Gamma$, $\tilde{f}(x,\xi)$  is differentiable with respect to $x$ and $L_1$-Lipschitz continuous with respect to $\|\cdot\|$ over $\mathcal{X}$.
	
	\item $\|\grad_{x}\tilde{f}(x,\xi)-\grad_{x}\tilde{f}(y,\xi)\|_*\leq\lambda_1 \|x-y\|$ for any $x,y\in \mathcal{X}$ and $\xi\in\Gamma$.
	
	\item $\|\grad_{x}\tilde{f}(x,\xi)-\grad_{x}\tilde{f}(x,\zeta)\|_*\leq \lambda_2\|\xi-\zeta\|_2$ for any $x\in\mathcal{X}$ and $\xi,\zeta\in \Gamma$.
\end{itemize}
\end{assumption}
We show in Appendix~\ref{dro-discrete} that if $\tilde{f}$ is the multilinear extension of some submodular-convex function, then $F$ satisfies Assumption~\ref{ass:DR-SFM}.

\subsubsection{Smoothing.}\label{sec:preprocessing}

There are a few issues when applying \cref{alg:conditional-gradient-upconcave-monotone,mirror-prox-non-differentiable} to \cref{eq:DR-SFM-wasserstein} under \cref{ass:DR-SFM}. The first issue is that the function $F$ may be non-differentiable, although $\tilde{f}(\cdot,\xi)$ is smooth for every $\xi\in\Gamma$. The second issue is that, even though \cref{lemma:robust-SFM-objective} provides a way to compute up-super-gradients of $F$ by minimizing $f(x,p)$ over $p \in \cP$, but unlike in \cref{sec:robust} this is now a convex nonlinear minimization problem, thus exact solution methods are unlikely. While we can obtain approximate solutions, if $f(x,\cdot)$ is not strongly convex, the distance between the approximate solution and an optimal solution can be large, which potentially incurs a large error in the up-super-gradient computation for $F$.

To remedy the challenges, we construct a function that has favorable structural properties, and at the same time, well approximates the original function $F$. We construct the following function by adding a strongly convex regularization term to $f$:
\begin{align}\label{DR-SFM-H}
	\begin{aligned}
		H_\epsilon(x) &:= \inf_{p \in \cP} h_\epsilon(x,p)\\
		\text{where } h_\epsilon(x,p) &:= f(x,p) + \frac{\epsilon}{2\theta^2} \sum_{i \in [n]} q_i \|\xi^i - \zeta^i\|_2^2.
	\end{aligned}
\end{align}
Here, $\theta$ is the radius of the Wasserstein ball and $\epsilon$ is an accuracy parameter. Note that the function $h_\epsilon$ for any fixed $x$ is strongly convex in $p$ thanks to the regularization term. In addition, $H_\epsilon(x)$ can be interpreted as applying the Nesterov smoothing idea \citep[Section 4.3]{BeckTeboulle2012-smoothing} to $F$.

With $H_\epsilon$ defined as in~\eqref{DR-SFM-H}, we consider approximating~\eqref{eq:DR-SFM-wasserstein} by replacing $F$ with $H_\epsilon$:
\begin{equation}\label{reformulation-re}
	\sup_{x\in\mathcal{X}}H_\epsilon(x).\tag{DR-SFM-c''}
\end{equation}
In fact, solving~\eqref{reformulation-re} is equivalent to solving~\eqref{eq:DR-SFM-wasserstein} up to some additive error. 
\begin{lemma}\label{lemma:approximation}
	Let $\hat{x}$ be an $(\alpha,\epsilon^\prime)$-approximate solution to~\eqref{reformulation-re}. Then 
	$$F(\hat{x})\geq \alpha\cdot\mbox{OPT}-\epsilon^\prime-\frac{\epsilon}{2}
	$$
	where $\mbox{OPT}$ is the value of~\eqref{eq:DR-SFM-wasserstein}, i.e., $\hat{x}$ is an $(\alpha,\epsilon^\prime+\epsilon/2)$-approximate solution to~\eqref{eq:DR-SFM-wasserstein}.
\end{lemma}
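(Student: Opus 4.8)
The plan is to relate the regularized function $H$ and the original function $F$ pointwise and then chase the resulting inequalities through the definition of an $(\alpha,\epsilon')$-approximate solution. The key observation is that $R(\bm{x},\bm{\zeta^1},\ldots,\bm{\zeta^N})$ differs from $\sum_{i\in[N]}p_i f(\bm{x},\bm{\zeta^i})$ only by the nonnegative term $\sum_{i\in[N]}p_i\frac{\epsilon}{2\theta^2}\|\bm{\xi^i}-\bm{\zeta^i}\|_2^2$, and on the feasible set $\mathrm{Z}$ we have the constraint $\sum_{i\in[N]}p_i\|\bm{\xi^i}-\bm{\zeta^i}\|_2^2\leq\theta^2$. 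Therefore for any feasible $(\bm{\zeta^1};\cdots;\bm{\zeta^N})\in\mathrm{Z}$,
\[
\sum_{i\in[N]}p_i f(\bm{x},\bm{\zeta^i}) \leq R(\bm{x},\bm{\zeta^1},\ldots,\bm{\zeta^N}) \leq \sum_{i\in[N]}p_i f(\bm{x},\bm{\zeta^i}) + \frac{\epsilon}{2}.
\]
Taking the infimum over $\mathrm{Z}$ on all three sides and using Proposition~\ref{prop:reformulation-1} to identify $\inf_{(\bm{\zeta^1};\cdots;\bm{\zeta^N})\in\mathrm{Z}}\sum_{i\in[N]}p_i f(\bm{x},\bm{\zeta^i})$ with $F(\bm{x})$, this yields $F(\bm{x}) \leq H(\bm{x}) \leq F(\bm{x}) + \epsilon/2$ for every $\bm{x}\in\mathcal{X}$.

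Next I would use these two sandwiching inequalities in the two directions needed. On the one hand, $F(\bm{\hat x}) \geq H(\bm{\hat x}) - \epsilon/2$. On the other hand, for any $\bm{x}\in\mathcal{X}$ we have $H(\bm{x}) \geq F(\bm{x})$, hence $\sup_{\bm{x}\in\mathcal{X}} H(\bm{x}) \geq \sup_{\bm{x}\in\mathcal{X}} F(\bm{x}) = \mathrm{OPT}$, where the last equality is again Proposition~\ref{prop:reformulation-1} combined with the definition of $\mathrm{OPT}$ as the value of~\eqref{DR-SFM}. Now plug in the assumption that $\bm{\hat x}$ is an $(\alpha,\epsilon')$-approximate solution to~\eqref{reformulation-re}, i.e. $H(\bm{\hat x}) \geq \alpha\cdot\sup_{\bm{x}\in\mathcal{X}} H(\bm{x}) - \epsilon'$. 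Chaining:
\[
F(\bm{\hat x}) \geq H(\bm{\hat x}) - \frac{\epsilon}{2} \geq \alpha\cdot\sup_{\bm{x}\in\mathcal{X}} H(\bm{x}) - \epsilon' - \frac{\epsilon}{2} \geq \alpha\cdot\mathrm{OPT} - \epsilon' - \frac{\epsilon}{2},
\]
which is exactly the claimed bound; the final sentence of the lemma is just a restatement via Definition~\ref{def:approx-solution}.

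I do not expect a genuine obstacle here — the argument is essentially bookkeeping once the pointwise sandwich $F\leq H\leq F+\epsilon/2$ is in place. The only mild subtlety worth stating carefully is that the sandwich requires both bounds: the lower bound $H\geq F$ (since the regularizer is nonnegative) is what lets us pass the supremum over $H$ below $\mathrm{OPT}$, and the upper bound $H\leq F+\epsilon/2$ (using the Wasserstein-budget constraint) is what controls the loss at the returned point $\bm{\hat x}$. One should also make sure the infimum defining $H$ is attained or at least that the sandwich survives passing to infima, but since the inequalities hold uniformly over $\mathrm{Z}$ before taking infima, taking infima preserves them with no attainment assumption needed. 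I would present this as a short two-paragraph proof: first establish $F(\bm{x})\leq H(\bm{x})\leq F(\bm{x})+\epsilon/2$ for all $\bm{x}\in\mathcal{X}$, then do the three-line chaining above.
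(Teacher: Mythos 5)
Your proposal is correct and follows essentially the same route as the paper: establish the pointwise sandwich $F(\bm{x})\leq H(\bm{x})\leq F(\bm{x})+\epsilon/2$ using that the regularizer is nonnegative and bounded by $\epsilon/2$ on $\mathrm{Z}$, then chain through the definition of an $(\alpha,\epsilon')$-approximate solution. The only (cosmetic) difference is that you bound the regularizer uniformly before taking infima, which sidesteps the attainment of the inner infimum that the paper's upper-bound argument implicitly uses.
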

\begin{proof}{Proof.}
	We first show that for any $x\in\mathcal{X}$, $F(x)\leq H_\epsilon(x)\leq F(x) + \epsilon/2$. Note that
	\begin{align*}
		H(x)&=\inf_{p\in \cP} \left\{  f(x,p)+ \frac{\epsilon}{2\theta^2} \sum_{i \in [n]} q_i \|\xi^i - \zeta^i\|_2^2 \right\}\\
		&\geq \inf_{p \in \cP} f(x,p) + \frac{\epsilon}{2\theta^2} \inf_{p \in \cP}  \sum_{i \in [n]} q_i \|\xi^i - \zeta^i\|_2^2\\
		&= F(x) + \frac{\epsilon}{2\theta^2} \inf_{p \in \cP} \sum_{i \in [n]} q_i \|\xi^i - \zeta^i\|_2^2.
	\end{align*}
	The last term of this inequality is at least $F(x)$, so $H_\epsilon(x)\geq F(x)$, as required. To show $H_\epsilon(x)\leq F(x) + \epsilon/2$, note that for any $p = (\zeta^1,\ldots,\zeta^n) \in \cP$ we have $h_\epsilon(x,p) = f(x,p) + \frac{\epsilon}{2\theta^2} \sum_{i \in [n]} q_i \|\xi^i - \zeta^i\|_2^2 \leq f(x,p) + \epsilon/2$ by definition of $\cP$. Therefore
	\[ H_\epsilon(x) = \inf_{p \in \cP} h_\epsilon(x,p) \leq \inf_{p \in \cP} f(x,p) + \frac{\epsilon}{2} \leq F(x) + \frac{\epsilon}{2}. \]
	
	Let $\hat{x}$ be an $(\alpha,\epsilon^\prime)$-approximate solution to~\eqref{reformulation-re}. Let $x^*$ be an optimal solution to~\eqref{eq:DR-SFM-wasserstein}, i.e., $F(x^*)=\mbox{OPT}$. Note that
	$$F(\hat{x}) \geq H_\epsilon(\hat{x}) -\frac{\epsilon}{2}\geq  \alpha\cdot \sup_{x\in\mathcal{X}} H_\epsilon(x) - \epsilon^\prime-\frac{\epsilon}{2} \geq  \alpha\cdot \OPT - \epsilon^\prime-\frac{\epsilon}{2}$$
	where the second inequality is from the assumption and the third inequality is because $H(x) \geq F(x)$ for all $x \in \cX$. Therefore, $\hat{x}$ is an $(\alpha,\epsilon^\prime+\epsilon/2)$-approximate solution to~\eqref{eq:DR-SFM-wasserstein}.
\Halmos
\end{proof}
Next we provide several structural properties of $H_\epsilon$ which allow us to use \cref{alg:conditional-gradient-upconcave-monotone,mirror-prox-non-differentiable} to solve \eqref{DR-SFM-H}. We defer the proofs to Appendix~\ref{proofs:robust}.
\begin{lemma}\label{lemma:H-structural}
Suppose \cref{ass:DR-SFM} holds. For any $\epsilon>0$, the function $H_\epsilon$ is nonnegative, monotone and up-concave. Moreover, $H_\epsilon$ is differentiable, and:
\begin{itemize}
	\item $H_\epsilon$ is $L_1$-Lipschitz, i.e., for any $x,y\in\mathcal{X}$, $|H_\epsilon(x)- H_\epsilon(y)| \leq L_1\|x-y\|.$
	
	\item $H_\epsilon$ is H\"older-smooth, i.e., for any $x,y\in\mathcal{X}$, 
	$$\|\grad H_\epsilon(x)-\grad H_\epsilon(y)\|_*\leq \lambda_1\|x-y\| + 2\lambda_2\theta\sqrt{\frac{L_1}{\epsilon}}\|x-y\|^{1/2}.$$
\end{itemize}
\end{lemma}

The next lemma states that we minimize $h_\epsilon(x,p)$ over $p \in \cP$ up to some small additive error $\Delta$, then we obtain an approximation of the gradient $\grad H_\epsilon(x)$.
\begin{lemma}\label{lemma:approximate-gradient}
	Let $x\in\mathcal{X}$, and let $p \in \cP$ be such that $h_\epsilon(x,p)\leq H_\epsilon(x) + \Delta$. Then 
	$$\left\|\grad H_\epsilon(x) -  \grad_{x}h_\epsilon(x,p) \right\|_*\leq \lambda_2 \theta\sqrt{\frac{2\Delta}{\epsilon}}.$$
\end{lemma}

\subsubsection{Convergence guarantees.}

We now have the necessary tools to apply \cref{alg:conditional-gradient-upconcave-monotone,mirror-prox-non-differentiable} to \cref{eq:DR-SFM-wasserstein} and derive their convergence rates. Note that \cref{lemma:H-structural} shows that $H_\epsilon$ is H\"{o}lder-smooth, i.e., it satisfies \cref{def:holder-smooth} for the function $h:\bbR_+ \to \bbR_+$ defined by
\[ h(r) = \lambda_1 r + 2 \lambda_2 \theta \sqrt{\frac{L_1}{\epsilon}} r^{1/2}. \]
\cref{cor:cg-convergence-rate}, we deduce the following convergence result for Algorithm~\ref{alg:conditional-gradient-upconcave-monotone} (noting that $\sigma=\min\{1,1/2\} = 1/2$ in the statement of \cref{cor:cg-convergence-rate}).
\begin{theorem}\label{thm:cg-dro}
	Suppose Assumptions \ref{ass:robust-SFM} and \ref{ass:DR-SFM} hold, and at each iteration $t$ of \cref{alg:conditional-gradient-upconcave-monotone}, we choose
	\begin{align*}
	p_t &= (\zeta^1,\ldots,\zeta^n) \in \cP \text{ s.t. } h_\epsilon((t-1)x_{t-1}/T,p_t) \leq H_\epsilon((t-1)x_{t-1}/T) + \Delta\\
	g_t &= \grad_{x} h_\epsilon((t-1)x_{t-1}/T, p_t) = \grad_{x} \sum_{i \in [n]} q_i \tilde{f}((t-1)x_{t-1}/T, \zeta^i).
	\end{align*}
	If $\Delta=O(\epsilon^3)$, then \cref{alg:conditional-gradient-upconcave-monotone} returns an $(1-1/e, \epsilon)$-approximate solution to \eqref{eq:DR-SFM-wasserstein} after $O(1/\epsilon^{2})$ iterations.
\end{theorem}

We may also apply \cref{mirror-prox-non-differentiable} to \cref{eq:DR-SFM-wasserstein}, which results in a worse approximation ratio, yet better dependence on $\epsilon$. \cref{lemma:H-structural} also shows that $H_\epsilon$ satisfies \cref{def:holder-smooth} for the constant function $h(r) = 2L_1$, i.e., it is Lipschitz-continuous. We deduce the following convergence result based on Corollary~\ref{cor:mp-convergence-rate} (noting that $\sigma=0$ in the statement of \cref{cor:cg-convergence-rate}).
\begin{theorem}\label{thm:mp-dro}
	Suppose Assumptions \ref{ass:robust-SFM} and \ref{ass:DR-SFM} hold, and at each iteration $t$ of \cref{alg:conditional-gradient-upconcave-monotone}, we choose
	\begin{align*}
		\tilde{p}_t &= (\tilde{\zeta}_t^1,\ldots,\tilde{\zeta}_t^n) \in \cP \text{ s.t. } h_\epsilon(v_t,p_t) \leq H_\epsilon(v_t) + \Delta\\
		\tilde{g}_t &= \grad_{x} h_\epsilon(v_t, \tilde{p}_t) = \sum_{i \in [n]} q_i \grad_{x} \tilde{f}(v_t, \tilde{\zeta}_t^i)\\
		p_t &= (\zeta_t^1,\ldots,\zeta_t^n) \in \cP \text{ s.t. } h_\epsilon(x_t,p_t) \leq H_\epsilon(x_t) + \Delta\\
		g_t &= \grad_{x} h_\epsilon(x_t, p_t) = \sum_{i \in [n]} q_i \grad_{x} \tilde{f}(x_t, \zeta_t^i).
	\end{align*}
	If $\Delta=O(\epsilon^3)$, %
	Algorithm~\ref{mirror-prox-non-differentiable} returns an $(1/2, \epsilon)$-approximate solution to \eqref{eq:DR-SFM-wasserstein} after $O(1/\epsilon^{4/3})$ iterations.
\end{theorem}

\section{Numerical experiments}

In this section, we present our experimental results to test the numerical performance of our algorithmic framework for the problem of maximizing a continuous DR-submodular function that is non-smooth or H\"older-smooth. We evaluated the efficacy of our algorithms, the continuous greedy method (Algorithm~\ref{alg:conditional-gradient-upconcave-monotone}) and the mirror-prox method (Algorithm~\ref{mirror-prox-non-differentiable}), on two sets of test instances: (1) the multi-resolution data summary problem with a non-differentiable utility function and (2) the distributionally robust movie recommendation problem. We used instances generated by synthetic simulated data for the multi-resolution data summary problem, which is described in Section~\ref{num:summary}. For the distributionally robust movie recommendation problem, we used the MovieLens 1M Dataset~\cite{movielens} to obtain the users' movie rating data. We explain the experimental setup for the multi-resolution data summary problem with a non-differentiable utility function in Section~\ref{num:summary} and that for the distributionally robust movie recommendation problem in Section~\ref{num:movie}. We report and summarize numerical results in Section~\ref{num:results}.

\subsection{Multi-resolution data summarization.}\label{num:summary}

Our first experiment is on the multi-resolution summary problem~\cite{bian} with a non-differentiable utility function. Given a collection of items $E=\{1,\ldots,k\}$, we assign each item $i$ a nonnegative score $x_i$ to measure its importance, by which we may recommend a subset of items. We set a threshold $\tau$ so that we report the set $S_{\tau}=\{i:x_{i}\geq\tau\}$ of items whose scores exceed the given threshold. By adjusting the value of $\tau$, we can decide the level of details or resolution of the summary. The scores of items basically represent the relative importance of items, and they are determined so that a utility function is maximized. The utility function is given by
\[
F(x) = \sum_{i \in [k]} \sum_{j \in [k]} \phi(x_{j})s_{ij}- \sum_{i \in [k]} \sum_{j \in [k]} x_{i}x_{j}s_{ij}
\]
where $s_{ij}\geq 0$ is the similarity index between two items $e_i$ and $e_j$. The first sum consists of the terms $\phi({x_j})s_{ij}$, which captures how much item $j$ contributes to item $i$ when $j$ has weight ${x_j}$. Here, $\Phi$ is a monotone concave function to model the diminishing returns property. The second sum consists of terms $x_ix_j s_{ij}$, which has a high value when two similar items $i$ and $j$ with a high similarity index $s_{ij}$ get large weights at the same time. Hence, taking away the second sum from the first encourages that if two items are similar, at most one of them gets a large score.

Note that $F$ is up-concave because the first sum is a concave function in $x$ and $-\sum_{i \in [k]} \sum_{j \in [k]} x_{i}x_{j}s_{ij}$ is DR-submodular with respect to $x$. We consider the case when $\phi$ is the following piece-wise linear function defined on $[0,1]$.
\[
\phi(x)=
\begin{cases}
7x & \text{if $x\in[0,\frac{1}{2}]$,} \\
6x+\frac{1}{2} & \text{if $x\in[\frac{1}{2},\frac{3}{4}]$,} \\
5x+\frac{5}{4} & \text{if $x\in[\frac{3}{4},1]$.}
\end{cases}
\]
In our experiments, we used randomly generated instances with $k=50$ items and similarity indices $s_{ij}$ sampled from the uniform distribution on $[0,1]$. For each instance, we determined a score vector by solving $\max_{x\in\mathcal{P}}F(x)$ where $\mathcal{P}=\{x\in[0,1]^k : \sum_{i\in [k]} x_{i}=5\}$. Note that the objective $F$ is not differentiable as $\phi$ is not differentiable. We tested the mirror-prox method, which works for non-differentiable objective functions, with the step size set to $\gamma_{t}=1/(2\sqrt{T})$. Although we do not have a theoretical ground for the continuous greedy method for the case of non-differentiable functions, we also ran the method to test its empirical effectiveness. We tested 30 randomly generated instances, and for each of the instances, we ran the algorithms for 30 iterations.

\subsection{Distributionally robust movie recommendation}\label{num:movie}

Our second problem is the distributionally robust formulation of the movie recommendation problem considered in~\cite{stan-submod,conditional}. We have a collection of $m$ movies $V = [m]$ that can be recommended to users.
Each user has a preference vector $\xi \in \Xi := \bbR^V$, where $\xi_j$ denotes the user's preference on movie $j \in V$. Then the evaluation of a set of movies $S$ presented to that user is given by the submodular set function $\tilde{f}_0(S,\xi)$ defined by
\[ \tilde{f}_0(S,\xi) := \max_{j \in S} \xi_j. \]
Let $P^*$ be the distribution of the preference index vectors of the entire population of the users. Then we can recommend a set of $k$ movies for the population of users based on solving 
$$\max_{|S|=k}\mathbb{E}_{\xi\sim P^*}\left[\tilde{f}_0(S,\xi)\right].$$
When $P^*$ is unknown to us, we may obtain a few sample preference vectors from some subset of $n$ users. Let $Q = \sum_{i \in [n]} q_i \delta_{\xi^i}$ be the empirical distribution of the collected preference vectors. Then we may obtain a list of movies to recommend by solving the following distributionally robust formulation:
$$\max_{|S|=k}\min_{P\in\mathcal{B}(Q,\theta)}\mathbb{E}_{\xi\sim P}\left[\tilde{f}_0(S,\xi)\right]$$
where $\mathcal{B}(Q,\theta)$ is the 2-Wasserstein ambiguity set of radius $\theta$ around the empirical distribution $Q$. We can solve the discrete problem by solving its continuous relaxation obtained by the multilinear extension and applying a proper rounding scheme, e.g.,~\cite{swap-rounding,pipage}, where the multilinear extension of $\tilde{f}_0(\cdot, \xi)$ is given by 
\[
\tilde{f}(x,\xi)=\sum_{S\subseteq[m]}\tilde{f}_0(S,\xi)\prod_{i\in S}x_{i}\prod_{i\notin S}(1-x_{i}).
\]
For our experiments, we used the MovieLens 1M Dataset from~\cite{movielens} that consists of $N=6041$ users and $m=4000$ movies. For each instance, we choose $n=10$ samples from the $N=6041$ data uniformly at random and consider the empirical distribution $Q$ on the samples. Then we consider the continuous relaxation
$$\max_{x\in \mathcal{X}}\min_{P\in\mathcal{B}(Q,\theta)}\mathbb{E}_{\xi\sim P}\left[\tilde{f}(x,\xi)\right]$$
where $\mathcal{X}=\{x\in[0,1]^m:\ \sum_{i=1}^m x_i = 5\}$. We reformulate this based on our framework described in Section~\ref{sec:dro}, so that we solve
\begin{align*}
\min_{x \in \cX} H_\epsilon(\vx), \quad H_\epsilon(\vx) &= \min_{p \in \cP} h_\epsilon(x,p)\\
h_\epsilon(x,p) &= \sum_{i \in [n]} q_i \left( \tilde{f}(x,\zeta^i) + \frac{\epsilon}{2 \theta^2} \|\zeta^i - \xi^i\|_2^2 \right)\\
p &= (\zeta^1,\ldots,\zeta^n) \in \cP = \left\{ (\zeta^1,\ldots,\zeta^n) \in \bbR^{m \times n} : \sum_{i \in [n]} q_i \|\xi^i - \zeta^i\|_2^2 \leq \theta^2 \right\}.
\end{align*}
We set the Wasserstein radius to $\theta=0.2$. We observed that each rating value $r_{ij}$ is from 1 to 5 in the dataset, in which case, Assumptions~\ref{ass:robust-SFM} and~\ref{ass:DR-SFM} are satisfied with $L_1=\lambda_1=L_2=5$ and $\lambda_2=10$. Lastly, to determine the accuracy parameter for inner minimization, we set $\epsilon=0.01$.

For each iteration of the continuous greedy algorithm (Algorithm~\ref{alg:conditional-gradient-upconcave-monotone}) and the mirror prox method (Algorithm~\ref{mirror-prox-non-differentiable}), we need to be able to compute the gradient $\grad_{x} \tilde{f}(x,\xi)$ of the multilinear extension. However, as there are exponentially many movies, computing the exact gradient is expensive. Instead, we obtained an unbiased estimator of the gradient $\grad_{x} \tilde{f}(x,\xi)$ based on the approach described in~{\cite[Section 9.3]{conditional}}.

Moreover, at each iteration, we need to solve the inner minimization problem. For that, we used gradient descent.
An issue with this approach is that it may be expensive to compute the values of $h_\epsilon,H_\epsilon,\grad_x h_\epsilon$, because they involve exponentially many multilinear terms as in the multilinear extension. To remedy this, we used the standard sampling approach to obtain an unbiased estimator for each quantity. To elaborate, note that $\tilde{f}(x,\xi) = \mathbb{E}_{S\sim x}\left[f(S,\xi)\right]$ where $S\sim x$ means that a set $S$ can be sampled by picking each movie $i\in[m]$ with probability $x_i$. Then obtaining sets $S_1,\ldots, S_B$ where $B=10$ and taking $(1/B)\sum_{b \in [B]} \tilde{f}_0(S_b,\xi)$, we obtain an unbiased estimator of $\tilde{f}(x,\xi)$. Similarly, we obtain an unbiased estimator of $\grad_{\xi} \tilde{f}(x,\xi)$.

For each problem instance, we ran both the conditional gradient descent algorithm and the mirror-prox algorithm for 300 iterations.

\subsection{Experimental results}\label{num:results}

\begin{figure*}[t]
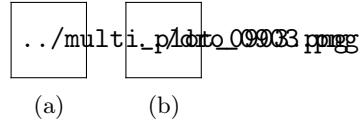

\centering
\subfigure[]
{
\includegraphics[scale=0.4]{../multi_plot_0903.png}
\label{fig:fig1}
}
\subfigure[]
{
\includegraphics[scale=0.4]{../dro_0903.png}
\label{fig:fig2}
}
\caption
{
Results of (left) the multi-resolution summary problem and (right) the distributionally robust movie recommendation.
}
\end{figure*}
Figure~\ref{fig:fig1} shows the numerical results for the multi-resolution summarization problem. We may observe that both the continuous greedy method and the mirror-prox method converge after a few iterations. However, the continuous greedy algorithm converges to a higher objective value than the mirror-prox method. This is interesting because the objective function for the multi-resolution summary problem is non-differentiable, in which case, the continuous greedy algorithm does not necessarily converge. A possible explanation would be that although the objective is non-differentiable, it is differentiable almost everywhere, and in particular, the function might have been differentiable at every iteration of the continuous greedy algorithm. The mirror-prox method converges to a value that is about 20\% less than the value to which the continuous greedy algorithm, and as our convergence result suggests, the value should be at least $1/2$ times the optimal value.

Figure~\ref{fig:fig2} shows the experimental results from the distributionally robust movie recommendation problem. We observe that the continuous greedy algorithm converges fast after less than 10 iterations. In contrast, the mirror-prox method exhibits a slower convergence pattern. Moreover, the continuous greedy algorithm converges to a higher value than the mirror-prox method, as expected from our theoretical results.

\begin{APPENDICES}
\section{Proofs from Section~\ref{sec:prelim}.}\label{app:proofs}
\begin{proof}{Proof of \cref{prop:reformulation-1}.}
	As $\mathcal{B}(Q,\theta)$ is a $2$-Wasserstein ball, the strong duality result of \citet[Eq. (11)]{BlanchetMurthy2019} states that
	\begin{align}\label{DR-SFM-G'}
		\begin{aligned}
			F(x)&= -\sup_{P \in \mathcal{B}(Q,\theta)} \mathbb{E}_{\xi \sim P} \left[ -\tilde{f}(x,\xi) \right]\\
			&= - \inf_{\lambda \geq 0} \left\{ \theta^2 \lambda + \mathbb{E}_{\xi \sim Q} \left[ \sup_{\zeta\in\Xi} \left\{ -f(x,\zeta) - \lambda \|\xi - \zeta\|_2^2 \right\} \right] \right\}\\
			&= \sup_{\lambda \geq 0} \left\{ \mathbb{E}_{\xi \sim Q} \left[ \inf_{\zeta\in\Xi} \left\{ \tilde{f}(x,\zeta) + \lambda \|\xi - \zeta\|_2^2 \right\} \right] - \theta^2 \lambda \right\}.
		\end{aligned}
	\end{align}
	Furthermore, by our assumption that $Q=\sum_{i\in[n]}q_i\delta_{\xi^i}$ has a finite support, the last term of~\eqref{DR-SFM-G'} can be rewritten as the following  finite-dimensional max-min problem:
	\begin{equation}\label{DR-SFM-G-1}
		F(x)=\sup_{\lambda \geq 0} \sum_{i \in [n]} q_i \left\{\inf_{\zeta\in \Xi} \left\{\tilde{f}(x,\zeta) + \lambda \|\xi^i - \zeta\|_2^2 \right\}  - \theta^2 \lambda\right\} .
	\end{equation}

	As $\tilde{f}(x,\zeta)$ is convex in $\zeta$, the function $\sum_{i \in [n]} q_i \tilde{f}(x,\zeta^i)$ is also convex. Moreover,  $\sum_{i\in[n]} q_i\|\xi^i - \zeta^i\|_2^2  - \theta^2\leq0$ is a convex constraint with respect to $\zeta^1,\ldots,\zeta^n$. Notice that choosing $\zeta^i=\xi^i$ for $i\in[n]$ gives $\sum_{i\in[n]} q_i\|\xi^i - \zeta^i\|_2^2  - \theta^2=-\theta^2<0$. Then strong Lagrangian duality holds to give
	\begin{align*}
		&\inf_{(\zeta^1,\cdots,\zeta^n)\in\mathrm{Z}}\sum_{i\in[n]}q_i \tilde{f}(x,\zeta^i)\\
		&= \sup_{\lambda\geq 0}\inf_{\zeta^1,\ldots,\zeta^n\in\Xi}\sum_{i\in[n]}q_i f(x,\zeta^i)-\lambda\left(\sum_{i\in[n]} q_i\|\xi^i - \zeta^i\|_2^2  - \theta^2\right),
	\end{align*}
	and it follows from~\eqref{DR-SFM-G-1} that
	$$\inf_{(\zeta^1,\cdots,\zeta^n)\in\mathrm{Z}}\sum_{i\in[n]}q_i \tilde{f}(x,\zeta^i)=F(x)=\inf_{P \in \mathcal{B}(Q,\theta)} \mathbb{E}_{\xi \sim P} \left[ \tilde{f}(x,\xi) \right],$$
	as required.
	\Halmos
\end{proof}

\begin{proof}{Proof of Lemma~\ref{diff-differential}.}
Take $x\in\mathbb{R}^d$ and $g\in \partial^\uparrow F(x)$. Let $j\in[d]$, and let $e_j$ be the $j$th standard basis vector. Since $F$ is up-concave, we have $F(x + t e_j)- F(x) \leq  g^\top(t e_j)$ for any $t\in\mathbb{R}_+$.
Then it follows that
$$e_j^\top \grad F(x)=\lim_{t\to 0+}\frac{F(x + t e_j)- F(x)}{t} \leq  e_j^\top g,$$
and therefore $e_j^\top (\grad F(x)-g)\leq 0$. Similarly, we have $F(x - t e_j)- F(x) \leq  g^\top(-t e_j)$ for any $t\in\mathbb{R}_+$, which implies that
$$-e_j^\top \grad F(x)=\lim_{t\to 0+}\frac{F(x - t e_j)- F(x)}{t} \leq  -e_j^\top g.$$
Thus we obtain $e_j^\top (\grad F(x)-g)\geq 0$. We have just proved that $e_j^\top (\grad F(x)-g)= 0$ for every $j\in[d]$, implying in turn that $g=\grad F(x)$. Therefore, $\partial^\uparrow F(x) = \left\{\grad F(x)\right\}$.
\Halmos
\end{proof}

\section{Proofs from Section~\ref{sec:robust}.}\label{proofs:robust}

\begin{proof}{Proof of \cref{lemma:robust-SFM-objective}.}
	It is trivial to see that $F$ is non-negative and up-concave (which follows since the minimum of arbitrary concave functions is concave). To show that $F$ is monotone, let us take $x,y\in\mathbb{R}^d$ such that $x\leq y$. For each $p \in \cP$, we have $f(x,p)\leq f(y,p)$. Taking the minimum of the left-hand side of this inequality over $p \in \cP$, it follows that
	$F(x) \leq f(y,p)$ for each $p \in \cP$. Then taking the minimum of the right-hand side, we obtain the desired $F(x)\leq F(y)$.

	For fixed $x$, let $p_{x}^* \in \argmin_{p \in \cP} f(x,p)$. Since $f(x,p_{x}^*)$ is differentiable in $x$, $\{ \grad f(x,p_{x}^*) \} = \partial^\uparrow f(x,p_{x}^*)$. Then for any $y \in \bbR^d$:
	$$F(y)\leq f(y,p_{x}^*) \leq f(x,p_{x}^*) + \grad f(x,p_{x}^*)^\top(y-x)=F(x) + \grad f(x,p_{x}^*)^\top(y-x),$$
	implying in turn that $\grad f(x,p_{x}^*) \in \partial^\uparrow F(x)$.
	
	Note that $\|\grad f(x, p)\|_* \leq L$ follows since each $f(x,p)$ is differentiable and $L$-Lipschitz continuous. Finally, let us argue that $F$ is $L$-Lipschitz continuous with respect to the norm $\|\cdot\|$. For $x,y\in\mathbb{R}^d$ and $p \in \cP$, we have
	$$f(x,p) = f(x,p) - f(y,p) + f(y,p) \leq L\|x-y\| + f(y,p).$$
	Therefore, we have $f(x,p) \leq L\|x-y\| + f(y,p)$. Taking the minimum of the left-hand side over $p \in \cP$, we deduce that $F(x) \leq L\|x-y\| + f(y,p)$. Then we take the minimum of its right-hand side over $p \in \cP$, which results in $F(x)\leq L\|x-y\|+ F(y)$. Switching $x$ and $y$, we have $F(y)\leq L\|y-x\|+ F(x)$. Therefore, it follows that $|F(x)-F(y)|\leq L\|x-y\|$.
	\Halmos
\end{proof}

To prove \cref{lemma:H-structural,lemma:approximate-gradient} we need the following result, which is based on standard convexity arguments.
\begin{lemma}\label{lemma:strong-convexity}
Given $x \in \bbR_+^d$ and $\epsilon > 0$, denote $p_x^* \in \argmin_{p \in \cP} h_\epsilon(x,p)$ by $p_x^* = (\zeta_x^1,\ldots,\zeta_x^n)$. Then for any $p = (\zeta^1,\ldots,\zeta^n) \in \cP$, we have
\[ h_\epsilon(x,p) - h_\epsilon(x,p_{x,\epsilon}^*) \geq \frac{\epsilon}{2 \theta^2} \sum_{i \in [n]} q_i \|\zeta^i - \zeta_x^i\|_2^2. \]
\end{lemma}
\begin{proof}{Proof.}
Recall that $\tilde{f}(x,\zeta)$ is convex in $\zeta$, and that $h_\epsilon(x,p) = f(x,p) + \frac{\epsilon}{2 \theta^2} \sum_{i \in [n]} q_i \|\xi^i - \zeta^i\|_2^2 = \sum_{i \in [n]} q_i \left( \tilde{f}(x,\zeta^i) + \frac{\epsilon}{2 \theta^2} \|\xi^i - \zeta^i\|_2^2 \right)$. Let $g_x^i$ be a subgradient of $\tilde{f}(x,\cdot)$ at $\zeta_x^i$. Then, by strong convexity, for each $i \in [n]$ we have
\[ \tilde{f}(x,\zeta^i) + \frac{\epsilon}{2 \theta^2} \|\xi^i - \zeta^i\|_2^2 \geq \tilde{f}(x,\zeta_x^i) + \frac{\epsilon}{2 \theta^2} \|\xi^i - \zeta_x^i\|_2^2 + \left(g_x^i + \frac{\epsilon}{\theta^2} (\zeta_x^i - \xi^i) \right)^\top (\zeta^i - \zeta_x^i) + \frac{\epsilon}{2\theta^2} \|\zeta^i - \zeta_x^i\|_2^2. \]
Denote $g_x = (q_1 g_x^1, \ldots, q_n g_x^n)$, which is a subgradient of $f(x,\cdot)$ at $p_x^*$. Multiplying both sides by $q_i$ then summing over $i \in [n]$ gives
\[ h_\epsilon(x,p) \geq h_\epsilon(x,p_x^*) + g_x^\top (p - p_x^*) + \frac{\epsilon}{\theta^2} \sum_{i \in [n]} (\xi^i - \zeta_x^i)^\top (\zeta^i - \zeta_x^i) + \frac{\epsilon}{2\theta^2} \sum_{i \in [n]} q_i \|\zeta^i - \zeta_x^i\|_2^2. \]
Note that
\[ g_x^\top (p - p_x^*) + \frac{\epsilon}{\theta^2} \sum_{i \in [n]} (\xi^i - \zeta_x^i)^\top (\zeta^i - \zeta_x^i) \geq 0 \]
for any $p=(\zeta^1,\ldots,\zeta^n) \in \cP$ by optimality of $p_x^*$, so we have
\[ h_\epsilon(x,p) \geq h_\epsilon(x,p_x^*) + \frac{\epsilon}{2\theta^2} \sum_{i \in [n]} q_i \|\zeta^i - \zeta_x^i\|_2^2 \]
as required.
\Halmos
\end{proof}

\begin{proof}{Proof of \cref{lemma:H-structural}.}
	Nonnegativity, monotonicity and up-concavity of $H_\epsilon$ follows since $h_\epsilon(x, p)$ satisfies the required parts of \cref{ass:robust-SFM}, and applying the proof from \cref{lemma:robust-SFM-objective}.
	
	Note that any minimizer $p^*_{x,\epsilon}$ for $h_\epsilon(x,p)$ is unique, as $h_\epsilon$ is (by construction) strongly convex in $p$. Therefore $H_\epsilon(x)$ is differentiable by the envelope theorem, and
	\[ \grad H_\epsilon(x) = \grad_{x} h_\epsilon(x, p_{x,\epsilon}^*) = \grad_{x} f(x, p_{x,\epsilon}^*). \]

	We now show that $H_\epsilon$ is $L_1$-Lipschitz continuous. For each $(\zeta^1,\cdots,\zeta^n)\in \cP$, we know that $\zeta^i\in\Gamma$ for each $i\in[n]$. For $x,y\in\mathcal{X}$ and $\zeta^i\in\Gamma$, we have
	\begin{align*}
		\tilde{f}(x,\zeta^i)=\tilde{f}(x,\zeta^i)-\tilde{f}(y,\zeta^i)+\tilde{f}(y,\zeta^i)\leq L_1\lVert x-y\rVert + \tilde{f}(y,\zeta^i)
	\end{align*}
	where the inequality holds due to Assumption~\ref{ass:DR-SFM}. Since $\sum_{i\in[n]}q_i=1$,
	\[
	\sum_{i\in[n]}q_{i}\tilde{f}(x,\zeta^i)\leq L_1\lVert x-y\rVert+\sum_{i\in[n]}p_{i}\tilde{f}(y,\zeta^i).
	\]
	By adding the term $\frac{\epsilon}{2\theta^2}\sum_{i\in[n]}q_i\left\|\xi^i-\zeta^i\right\|_2^2$ to both sides, we obtain
	\begin{align*}
		h_\epsilon(x,p) \leq L_1\lVert x-y\rVert + h_\epsilon(y,p).
	\end{align*}
	By taking the infimum of its left-hand side over $p\in \cP$ and that of the right-hand side next, we obtain
	\begin{align*}
		H_\epsilon(x) &\leq L_1\lVert x-y\rVert + H_\epsilon(y),
	\end{align*}
	and therefore, $H(x)-H(y)\leq L_1\lVert x-y\rVert$. Similarly, we deduce that $-H(x)+H(y)\leq L_1\lVert x-y\rVert$, hence $H$ is $L_1$-Lipschitz continuous in the norm $\|\cdot\|$.
	
	We finally show that $H_\epsilon(x)$ is H\"older-smooth. \cref{lemma:strong-convexity} gives
	\[ h_\epsilon(x,p) - h_\epsilon(x,p_{x,\epsilon}^*) \geq \frac{\epsilon}{2 \theta^2} \sum_{i \in [n]} q_i \|\zeta^i - \zeta_x^i\|_2^2 \]
	for any $p = (\zeta^1,\ldots,\zeta^n) \in \cP$, where $p_{x,\epsilon}^* = \left( \zeta_{x,\epsilon}^1,\ldots,\zeta_{x,\epsilon}^n \right) \in \argmin_{p' \in \cP} h_\epsilon(x,p')$.
	Let $\hat{x},\tilde{x}$ be two arbitrary points, and let $\hat{p}=(\hat{\zeta}^1,\ldots,\hat{\zeta}^n),\tilde{p}=(\tilde{\zeta}^1,\ldots,\tilde{\zeta}^n)$ be the minimizers of $h_\epsilon(\hat{x},p)$ and $h_\epsilon(\tilde{x},p)$ over $p \in \cP$ respectively.
	Note that
	\begin{subequations}\label{eq:lemma:H-smooth-1}
		\begin{align}
			\|\grad H_\epsilon(\hat{x})-\grad H_\epsilon(\tilde{x})\|_*&= \|\grad_{x} h_\epsilon(\hat{x},\hat{p})-\grad_{x} h_\epsilon(\tilde{x},\tilde{p})\|_*\\
			&\leq \|\grad_{x} h_\epsilon(\hat{x},\hat{p})-\grad_{x} h_\epsilon(\tilde{x},\hat{p})\|_* +\|\grad_{x} h_\epsilon(\tilde{x},\hat{p})-\grad_{x} h_\epsilon(\tilde{x},\tilde{p})\|_*\\
			&\leq \lambda_1\|\hat{x}-\tilde{x}\| + \lambda_2 \sum_{i\in[n]}q_i\|\hat{\zeta}^i-\tilde{\zeta}^i\|_2
		\end{align}
	\end{subequations}
	where the second inequality is due to Assumption~\ref{ass:DR-SFM}. Next, to bound $\|\grad H(\hat{x})-\grad H(\tilde{x})\|_*$ based on~\eqref{eq:lemma:H-smooth-1}, we consider the term $\sum_{i\in[n]}q_i\|\hat{\zeta}^i-\tilde{\zeta}^i\|_2$.
	By the Cauchy-Schwarz inequality, we obtain the following:
	\begin{align*}
		\sum_{i\in[n]}q_i\|\hat{\zeta}^i-\tilde{\zeta}^i\|_2&\leq \left( \sum_{i\in[n]}q_i \right)^{1/2} \left( \sum_{i\in[n]}q_i\|\hat{\zeta}^i-\tilde{\zeta}^i\|_2^2 \right)^{1/2}\\
		&\leq \sqrt{\frac{2\theta^2}{\epsilon}}(h_\epsilon(\tilde{x},\hat{p})-h_\epsilon(\tilde{x},\tilde{p}))^{1/2}.
	\end{align*}
	Then it follows that
	\begin{align*}
		h_\epsilon(\tilde{x},\hat{p})-h_\epsilon(\tilde{x},\tilde{p}) &\leq |h_\epsilon(\tilde{x},\hat{p})-h_\epsilon(\hat{x},\hat{p})|+|H_\epsilon(\hat{x})- H_\epsilon(\tilde{x})|\\
		&\leq 2L_1\|\hat{x}-\tilde{x}\|
	\end{align*}
	holds because of Assumption~\ref{ass:DR-SFM} and $L_1$-Lipschitz continuity of $H$. This then implies
	$$\|\grad H_\epsilon(\hat{x})-\grad H_\epsilon(\tilde{x})\|_*\leq \lambda_1\|\hat{x}-\tilde{x}\| + 2\lambda_2\theta\sqrt{\frac{L_1}{\epsilon}}\|\hat{x}-\tilde{x}\|^{1/2},$$
	as required.
	\Halmos
\end{proof}

\begin{proof}{Proof of Lemma~\ref{lemma:approximate-gradient}.}
	Let $p_x^* = (\zeta_x^1,\ldots,\zeta_x^n) \in \cP$ be such that $H_\epsilon(x)=h_\epsilon(x,p_x^*)$. Then
	\begin{equation}\label{eq:lemma:approximate-gradient-3}
		h_\epsilon(x,p)-h_\epsilon(x,p_x^*)=h_\epsilon(x,p)-H_\epsilon(x)\leq\delta.
	\end{equation}
	\cref{lemma:strong-convexity} shows that for any $p=(\zeta^1,\ldots,\zeta^n) \in \cP$ we have
	\begin{equation}\label{eq:lemma:approximate-gradient-4}
		\sum_{i \in [n]} q_i \|\zeta^i - \zeta_x^i\|_2^2 \leq \frac{2\theta^2}{\epsilon} (h_\epsilon(x,p)-H_\epsilon(x)) \leq \frac{2\theta^2\delta}{\epsilon}.
	\end{equation}
	Next, based on the last statement of \cref{ass:DR-SFM}, we deduce the following.
	\begin{align*}
		\left\|\grad H_\epsilon(x) -  \grad_{x}h_\epsilon(x,p)\right\|_*&\leq \sum_{i\in[n]}q_i\left\|\grad_{x}\tilde{f}(x,\zeta_x^i)-\grad_{x}\tilde{f}(x,\zeta^i)\right\|_*\\
		&\leq \lambda_2 \sum_{i\in[n]}q_i\|\zeta^i-\zeta_x^i\|_2\\
		&\leq \lambda_2 \left(\sum_{i\in[n]}q_i \right)^{1/2}\cdot \left( \sum_{i\in[n]}q_i\|\zeta^i-\zeta_x^i\|_2^2 \right)^{1/2}\\
		&\leq \lambda_2\theta\sqrt{\frac{2\delta}{\epsilon}}
	\end{align*}
	where the first inequality is by the triangle inequality, the second inequality follows from the last statement of \cref{ass:DR-SFM}, the third inequality is due to the Cauchy-Schwarz inequality, and the last inequality comes from~\eqref{eq:lemma:approximate-gradient-4}.
	\Halmos
\end{proof}

\section{Discrete submodular functions.}\label{dro-discrete}

\begin{lemma}\label{lemma:assumptions}
Let $V$ be a finite ground set, and $\tilde{f}_0:2^V\times\Gamma\to\mathbb{R}_+$ be a nonnegative function such that $\tilde{f}_0(S,\xi)$ is monotone and submodular with respect to $S\subseteq V$, and convex and $L_2$-Lipschitz continuous with respect to $\xi\in \Gamma$. We further assume that $\tilde{f}_0(\emptyset,\xi)=0$ for $\xi\in \Gamma$. If $\tilde{f}(x,\xi)$ is the multilinear extension of $\tilde{f}_0(S,\xi)$ for each $\xi\in\Gamma$, then last three statements in \cref{ass:DR-SFM} hold for $\tilde{f}$ with $L_1=\lambda_1=\sup_{(j,\xi)\in V\times \Gamma}\tilde{f}_0(\{j\},\xi)$ and $\lambda_2= 2L_2$.
\end{lemma}
\begin{proof}{Proof.}
By~\citet{matroidcon}, we have for $y\in [0,1]^V$ and $\zeta\in \Gamma$,
$$0\leq\frac{\partial \tilde{f}(y,\zeta)}{\partial x_i}=\tilde{f}(y,\zeta)|_{y_i=1} -\tilde{f}(y,\zeta)|_{y_i=0}\leq \tilde{f}_0(\{i\},\zeta)$$
where the last inequality is by submodularity of $\tilde{f}_0$. Therefore, $\|\grad_{x} \tilde{f}(y,\zeta)\|_\infty\leq \max_{i\in V}\tilde{f}_0(\{i\},\zeta)$, which implies that, for every $\xi\in\Gamma$, $\tilde{f}(x,\xi)$ is $L_1$-Lipschitz continuous in the $\ell_1$-norm over $x \in \mathcal{X}$ where $L_1=\sup_{(j,\xi)\in V\times \Gamma}\tilde{f}_0(\{j\},\xi)$. Thus the fourth statement of \cref{ass:DR-SFM} holds

Next we argue that the fifth statement of \cref{ass:DR-SFM} holds. By \citet[Lemma C.1]{grad-sfm}, for $y\in [0,1]^V$ and $\zeta\in \Gamma$, 
$$- \max_{j\in V} \tilde{f}_0(\{j\},\zeta)\leq\frac{\partial^2 \tilde{f}(y,\zeta)}{\partial x_i\partial x_j} \leq0$$
Let $z\in\mathbb{R}^V$. Note that $$\left|z^\top \grad^2_{x}\tilde{f}(y,\zeta) z\right|\leq \sum_{i,j\in V} \left|\frac{\partial^2 \tilde{f}(y,\zeta)}{\partial x_i\partial x_j}\right|\cdot \left|z_iz_j\right|\leq \max_{j\in V} \tilde{f}_0(\{j\},\zeta)\|z\|_1^2.$$
Therefore, $\tilde{f}(\cdot,\xi)$ for a fixed $\xi$ is $\left(\max_{j\in V} \tilde{f}_0(\{j\},\xi)\right)$-smooth, so for any $y^1,y^2\in[0,1]^V$, we have
$$\|\grad_{x}\tilde{f}(y^1,\zeta)-\grad_{x}\tilde{f}(y^2,\zeta)\|_\infty\leq \sup_{(j,\xi)\in V\times\Gamma}\tilde{f}_0(\{j\},\xi)\|y^1-y^2\|_1.$$
Therefore, the fifth statement of \cref{ass:DR-SFM} holds.

We now consider the sixth statement of \cref{ass:DR-SFM}. It is known~\cite{matroidcon} that
$$\frac{\partial}{\partial x_j}\tilde{f}(y,\zeta)=\sum_{S\subseteq V\setminus\{j\}} \left(\tilde{f}_0(S\cup \{j\},\zeta) - \tilde{f}_0(S,\zeta)\right) \prod_{\ell\in S}y_j\prod_{\ell \in V\setminus\{j\}}(1-y_j).$$
Then it follows that
\begin{align}
\begin{aligned}
&\frac{\partial}{\partial x_j}\tilde{f}(y,\zeta^1)-\frac{\partial}{\partial x_j}\tilde{f}(y,\zeta^2)\\
&= \sum_{S\subseteq V\setminus\{j\}} \left(\left(\tilde{f}_0(S\cup \{j\},\zeta^1) - \tilde{f}_0(S,\zeta^1)\right)-\left(\tilde{f}_0(S\cup \{j\},\zeta^2) - \tilde{f}_0(S,\zeta^2)\right)\right) \prod_{\ell\in S}y_j\prod_{\ell \in V\setminus\{j\}}(1-y_j)\\
&=\sum_{S\subseteq V\setminus\{j\}} \left(\left(\tilde{f}_0(S\cup \{j\},\zeta^1) -\tilde{f}_0(S\cup \{j\},\zeta^2) \right)-\left( \tilde{f}_0(S,\zeta^1)- \tilde{f}_0(S,\zeta^2)\right)\right) \prod_{\ell\in S}y_j\prod_{\ell \in V\setminus\{j\}}(1-y_j)\\
&\leq \sum_{S\subseteq V\setminus\{j\}} \left(L_2\|\zeta^1-\zeta^2\|+L_2\|\zeta^1-\zeta^2\|\right) \prod_{\ell\in S}y_j\prod_{\ell \in V\setminus\{j\}}(1-y_j)\\
&= 2L_2\|\zeta^1-\zeta^2\|\sum_{S\subseteq V\setminus\{j\}}  \prod_{\ell\in S}y_j\prod_{\ell \in V\setminus\{j\}}(1-y_j)\\
&= 2L_2\|\zeta^1-\zeta^2\|
\end{aligned}
\end{align}
where the inequality holds because $\tilde{f}_0(S,\cdot)$ is $L_2$-Lipschitz continuous for any $S\subseteq V$. Hence the sixth statement of \cref{ass:DR-SFM} holds true with $\lambda_2 =2L_2$.
\Halmos
\end{proof}

\end{APPENDICES}

 \section*{Acknowledgments.}
This research is supported, in part, by KAIST Starting Fund (KAIST-G04220016), FOUR Brain Korea 21 Program (NRF-5199990113928), and National Research Foundation of Korea (NRF-2022M3J6A1063021).

\bibliographystyle{informs2014} %
\bibliography{mybibfile} %

\end{document}